\newtheorem{problem}{Problem}
\newtheorem{theorem}{Theorem}
\newtheorem{lemma}{Lemma}
\newtheorem{remark}{Remark}
\def\rf{\eqref}
\def\no{\nonumber}
\title{
Closed-Loop Stackelberg Strategy for Linear-Quadratic Leader-Follower Game
\thanks{
This work was supported by the Original Exploratory Program Project of National Natural Science Foundation of China (62250056), the Joint Funds of the National Natural Science Foundation of China (U23A20325), the Major Basic Research of Natural Science Foundation of Shandong Province (ZR2021ZD14), and the High-level Talent Team Project of Qingdao West Coast New Area (RCTD-JC-2019-05), the National Natural Science Foundation of China (62103241,62273213), Shandong Provincial Natural Science Foundation (ZR2021QF107).
Corresponding author: Huanshui Zhang. Email: hszhang@sdu.edu.cn}
}
\author{Hongdan Li, Juanjuan Xu and Hunashui Zhang }
\begin{document}

\pagenumbering{arabic}
 \setcounter{page}{1}

\pagenumbering{arabic} \thispagestyle{empty} \setcounter{page}{1}

\baselineskip 16pt
\date{}
  \maketitle
\begin{abstract}
This paper is concerned with the closed-loop Stackelberg strategy for linear-quadratic leader-follower game. Completely different from the open-loop and feedback Stackelberg strategy, the solvability of the closed-loop solution even the linear case remains challenging. The main contribution of the paper is to derive the explicitly linear closed-loop Stackelberg strategy with one-step memory in terms of Riccati equations. The key technique is to apply the constrained maximum principle to the leader-follower game and explicitly solve the corresponding forward and backward difference equations. Numerical examples verify the effectiveness of the results, which achieves better performance than feedback strategy.
\end{abstract}

\section{Introduction}

Stackelberg solution is of great significance in plenty of fields including economics, engineering and biology \cite{Application1, Application2, Application3, Application4}. Specific examples are referred to government regulation of macro economy \cite{Government} and transboundary pollution \cite{Pollution}. The salient feature of such problems is that at least two players are involved in nonzero-sum game, well-known as Stackelberg game or leader-follower game, and one player named by the leader has the ability to enforce his strategy on the other player named by the follower. The asymmetry of players' roles makes the derivation of Stackelberg solution extremely complex, which is completely different from the symmetric positions among players in Nash game \cite{Nash}.

The dynamic Stackelberg solution was initially studied in the 1970s \cite{Dynamic} and has achieved many fruitful results since then. For example, an implicit necessary and sufficient condition was given in \cite{Openloop1} for the unique existence of the solution based on the operator technique.
\cite{Openloop2} proposed a sufficient existence condition by adopting a Lyapunov-type approach. In particular, the explicit Stackelberg solution is given for the linear-quadratic game in terms of three coupled and nonsymmetric Riccati equations. To avoid the solvability of the above nonstandard Riccati equations, \cite{Openloop3} introduced three decoupled and symmetric Riccati equations to design the Stackelberg solution, which was later extended to solve the Stackelberg game with time delay \cite{Openloop4}. The stochastic linear-quadratic leader-follower game with multiplicative noise was solved in  \cite{Openloop5}.
It is noted that the Stackelberg solutions in the above mentioned work are open-loop, that is, the information sets available to the players contain only initial value. This makes it easy to obtain the equilibrium conditions based on the maximum principle. However, it is not very reasonable to assume that the players have access to only open-loop information in a dynamic game as has been pointed in \cite{Basar79}.

Another widely studied strategy is feedback strategy, i.e., in the feedback form of the state at current time. Once the feedback structure of the Stackelberg strategy is known, the optimal feedback gain can be obtained by applying the matrix maximum principle or dynamic programming. For instance, \cite{Feedback1} derived feedback Stackelberg strategies for linear-quadratic two-player game by using dynamic programming. A multilevel feedback Stackelberg strategy was then studied in \cite{Feedback2} for
systems with $M$ players and sufficient conditions were obtained for existence and uniqueness of solution in a linear-quadratic discrete-time game.
Necessary conditions for feedback Stackelberg strategies were given in \cite{Feedback3} for linear stochastic systems governed by It\^{o} differential equations with multiple followers by
using sets of cross-coupled algebraic nonlinear matrix equations, and two numerical algorithms
were presented to solve the matrix equations based on Newton's method and the semidefinite programming problem.
\cite{Feedback4} studied the mean field Stackelberg games
between the leader and a large population of followers. By treating the mean field as part of the dynamics of the
major player and a representative minor player, the decision problems were Markovianized and the equilibrium strategy was shown in a state feedback form.

Compared with the open-loop and feedback strategies, the closed-loop strategy, whose information sets available to the players contain all the states from initial time to the current time, seems to be more advantageous because the leader's motivation for using the Stackelberg strategy is to reduce losses \cite{Closedloop}. However, its derivation faces major challenge even in the linear quadratic game where the standard techniques such as dynamic programming are invalid. The main reason lies in that the follower's reaction can only be expressed implicitly for every announced strategy of the leader which further leads to a nonstandard optimization problem for the leader containing implicit constraints \cite{BasarOnestep}.
To overcome this difficulty, one way is to study the closed-loop memoryless Stackelberg strategy, that is, the information sets available to the players are restricted be consisting of only two parts: the state at current time and the initial state. In this case, the leader faces a nonclassical optimization problem where the controlled system contains the derivative of the control with respect to the state.
\cite{CLM1} derived the necessary conditions satisfied by the leader's closed-loop memoryless
optimal strategy by applying the variational techniques to the state system
with mixed-boundary conditions and by reducing the nonclassical control problem into a classical one, respectively.
\cite{CLM2} studied the stochastic Stackelberg game and derived the maximum principle for the leader's global Stackelberg solution under the adapted closed-loop memoryless information structure which is closely related to the theory of controlled forward and backward stochastic differential equations. The other way is proposed in \cite{Basar79} which relates the closed-loop Stackelberg strategy to a particular representation of the optimal solution of a team problem. \cite{Teamoptimal1} derived the  team-optimal closed-loop Stackelberg strategies for a class of continuous-time two-player nonzero-sum differential games. \cite{Teamoptimal2} studied the team-optimal closed-loop Stackelberg strategies for systems with slow and fast modest. \cite{Teamoptimal3} investigated the team-optimal closed-loop Stackelberg strategies for discrete-time descriptor systems.

Although many important progresses have been made in the research of closed-loop strategy, the problem has not yet been solved. In this paper, we will study the closed-loop Stackelberg strategy for linear-quadratic leader-follower game. In particular, the strategy is in the linear closed-loop form with one-step memory. By applying the constrained maximum principle, the solvability of the leader-follower game is reduced to that of forward and backward difference equations. The main contribution is to derive the explicitly linear closed-loop Stackelberg strategy with one-step memory in terms of Riccat equations. Numerical examples show that the derived closed-loop strategy achieves better performance than feedback strategy.

The remainder of the paper is organized as follows. Section 2 presents the studied problem.
The main result is given in Section 3. Numerical example is given in Section 4. Some concluding remarks are given in the last section.

The following notations will be used throughout this paper: $\mathbb{R}^n$
denotes the set of $n$-dimensional vectors; $x'$ denotes the
transpose of $x$; a symmetric matrix $M>0\ (\geq 0)$ means that $M$ is
strictly positive-definite (positive semi-definite).

\section{Problem  Statement  }

\normalsize
Consider the following  linear system
\begin{eqnarray}
x_{k+1}=Ax_{k}+B_{1}u^{f}_{k}+B_{2}u^{l}_{k},\label{f2.1}
\end{eqnarray}
where $x_{k}\in \mathbb{R}^{n}$ is a state. One of the players $u^{f}_{k}\in \mathbb{R}^{m_{1}}$ is as a follower, the other $u^{l}_{k}\in \mathbb{R}^{m_{2}}$ is a leader. $A, B_{1}, B_{2}$ are real matrices with compatible dimensions.
Define the following two cost functionals
\begin{eqnarray}
 J^{f}&=&\sum^{N}_{k=0}[x'_{k}Qx_{k}+(u^{f}_{k})'R_{1}u^{f}_{k}
 +(u^{l}_{k})'R_{2}u^{l}_{k}]+x'_{N+1}P_{N+1}x_{N+1},\label{f2.3}\\
 J^{l}&=&\sum^{N}_{k=0}[x'_{k}\bar{Q}x_{k}+(u^{f}_{k})'\bar{R}_{1}u^{f}_{k}
 +(u^{l}_{k})'\bar{R}_{2}u^{l}_{k}]+x'_{N+1}\bar{P}_{N+1}x_{N+1},\label{f2.4}
\end{eqnarray}
where the state weighting matrices $Q, \bar{Q}, P_{N+1}$ and $\bar{P}_{N+1}$ are semi-positive definite. The control weighting matrices $R_{i}$ and $\bar{R}_{i}, i=1, 2$ are positive definite.

For illustrating, recall some definitions (refer to Definition 5.2 in \cite{Basar99}) that a player's information structure $\eta_{k}$ is

1) an open-loop pattern if  $\eta_{k}=\{x_{0}\}$;

2) a feedback information pattern if $\eta_{k}=\{x_{k}\}$;

3) a closed-loop information with memoryless if $\eta_{k}=\{x_{0}, x_{k}\}$;

4) a closed-loop information with memory if $\eta_{k}=\{x_{0},\ldots,x_{k}\}$.

Note that it still remains challenging to find the closed-loop Stackelberg strategies even for an LQ case since the classical optimal control methods fails. So far there is no direct method to solve the closed-loop Stackelberg strategies. However, from the definition of closed-loop  information structure, the linear closed-loop Stackelberg strategies with one-step memory  are assumed to be as follows for $0\leq k \leq N$.
\begin{eqnarray}
u^{f}_{k}&=&K_{k}x_{k}+G_{k}x_{k-1},\label{f2.5}\\
u^{l}_{k}&=&\bar{K}_{k}x_{k}+\bar{G}_{k}x_{k-1},\label{f2.6}
\end{eqnarray}
where parameter matrices $K_{k}, G_{k}\in \mathbb{R}^{m_{1}\times n}, \bar{K}_{k}, \bar{G}_{k}\in \mathbb{R}^{m_{2}\times n}$ with $G_{0}=\bar{G}_{0}=0$ are to be determined.
The problem to be discussed in this paper is as follows.
\begin{problem}\label{p1}
Find the leader's controller $u^{l}$ as defined in \rf{f2.6} to minimize \rf{f2.4} under
the condition that the follower constructs a controller $u^{f}$ as defined in \rf{f2.5} minimizing \rf{f2.3} given $u^{l}$.
\end{problem}
\begin{remark}\label{r0}
 This problem is called a linear closed-loop Stackelberg game control. It is different from the well-known state  feedback stategy where the controller is assumed to be of current state feedback, i.e., $u_{k}=K_{k}x_{k}$. It will be shown  that the controller with linear closed-loop form as \rf{f2.5}-\rf{f2.6} will achieve better performance than state feedback controller.
\end{remark}
\begin{remark}\label{r01}
It is noted that a team-optimal approach has been proposed in \cite{Basar79} to derive the closed-loop Stackelberg strategy where the explicit solution depends on both recursive equations and one kind of matrix equation. As comparison, the gain matrices in (\ref{f2.5}) and (\ref{f2.6}) will be derived in terms of Riccati equations as shown in Theorem \ref{t3} of Section 3.3 in this paper.
\end{remark}
\section{Main Result}
To solve Problem \ref{p1}, we first review the following standard LQR with a constraint of current state-feedback. Consider the following linear system
\begin{eqnarray}
x_{k+1}=Ax_{k}+Bu_{k},\label{1281}
\end{eqnarray}
and define the cost functional as follows.
\begin{eqnarray}
J_{N}=\Big\{\sum^{N}_{k=0}[x'_{k}Qx_{k}+u_{k}'Ru_{k}]+x'_{N+1}P_{N+1}x_{N+1}\Big\},\label{0}
\end{eqnarray}
where $Q,R$ are semi-positive definite. Different from the standard LQR problem, we assume that the controller is with the following form
\begin{eqnarray}
u_{k}=K_{k}x_{k}.\label{1282}
\end{eqnarray}
The constraint LQR  problem is to find the controller with form of \rf{1282} to minimize \rf{0}. Now, we have the following result:
\begin{lemma}\label{l1}
Suppose the above constrained LQR problem admits a solution, then, the controller satisfies
\begin{eqnarray}
0=Ru_{k}+B'\lambda_{k}-\beta_{k}, \label{2}
\end{eqnarray}
where $\lambda_k$ is as
\begin{eqnarray}
\left\{
  \begin{array}{ll}
\lambda_{k-1}=Qx_{k}+A'\lambda_{k}+K_{k}'\beta_{k},\ \ 0<k\leq N,\\
\lambda_{N}=P_{N+1}x_{N+1},
  \end{array}
\right. \label{02}
\end{eqnarray}
and $\beta_{k}$ is arbitrary to be determined.
\end{lemma}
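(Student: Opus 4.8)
The plan is to treat the constrained LQR problem as a minimization over the finite-dimensional parameter set $\{K_0,\dots,K_N\}$ subject to the dynamics \rf{1281} with the feedback substitution $u_k=K_kx_k$, and to apply a Lagrange-multiplier (constrained maximum principle) argument. First I would introduce an adjoint sequence $\lambda_k$ and a multiplier sequence $\beta_k$ associated with the equality constraint $u_k-K_kx_k=0$, and form the Lagrangian
\begin{eqnarray}
L=\sum_{k=0}^{N}\big[x_k'Qx_k+u_k'Ru_k\big]+x_{N+1}'P_{N+1}x_{N+1}
+\sum_{k=0}^{N}2\lambda_k'\big(Ax_k+Bu_k-x_{k+1}\big)
+\sum_{k=0}^{N}2\beta_k'\big(K_kx_k-u_k\big).\no
\end{eqnarray}
Then I would compute stationarity conditions by taking variations with respect to the independent variables. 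Varying $x_{k+1}$ (for $0\le k\le N-1$) and $x_{N+1}$ links consecutive $\lambda$'s and produces the terminal condition, giving exactly the backward recursion \rf{02} once the $K_k'\beta_k$ term from the constraint is included; varying $x_0$ is unnecessary since $x_0$ is fixed. Varying $u_k$ yields $0=Ru_k+B'\lambda_k-\beta_k$, which is \rf{2}.

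The key structural point — and the reason the statement is phrased with "$\beta_k$ arbitrary to be determined" — is that stationarity with respect to $K_k$ does not pin down $\beta_k$ uniquely: the derivative $\partial L/\partial K_k$ only gives $\beta_k x_k'=$ (something), i.e. it determines $\beta_k$ only along the direction $x_k$, so $\beta_k$ is left as a free multiplier to be fixed later (in the full leader–follower problem, by the leader's announced strategy). So I would be careful to present \rf{2} and \rf{02} as the necessary conditions that hold for \emph{some} multiplier sequence $\beta_k$, rather than claiming $\beta_k$ is determined here. The hypothesis "suppose the problem admits a solution" is what licenses the existence of such multipliers (a constraint-qualification / first-order necessary condition statement); since the constraint set is affine in the decision variables for fixed $x_0$, no further regularity is needed.

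The main obstacle I anticipate is bookkeeping the variations cleanly: because $u_k=K_kx_k$ and $x_k$ itself depends on $K_0,\dots,K_{k-1}$, a naive direct differentiation in the $K$'s is messy. The clean route is to treat $x_k$ and $u_k$ as independent variables tied together by the two families of constraints (dynamics and feedback form) and let the multipliers $\lambda_k,\beta_k$ absorb the coupling — this is precisely the "constrained maximum principle" the introduction advertises. I would therefore organize the proof as: (i) set up the Lagrangian with both constraint families; (ii) collect the $x_{k}$-variations for $1\le k\le N$ to read off \rf{02}; (iii) collect the $x_{N+1}$-variation for the terminal condition; (iv) collect the $u_k$-variation to read off \rf{2}; (v) note that the $K_k$-variation is consistent with, but does not over-determine, $\beta_k$, which is why $\beta_k$ remains free. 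Steps (ii)–(iv) are routine index manipulations once the setup in (i) is correct; step (v) is the conceptual heart and should be stated explicitly.
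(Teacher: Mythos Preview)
Your proposal is correct and follows essentially the same approach as the paper: both introduce Lagrange multipliers $\lambda_k$ for the dynamics and $\beta_k$ for the feedback constraint $u_k=K_kx_k$, then read off \rf{2} and \rf{02} as first-order stationarity conditions. The paper phrases this via the per-stage Hamiltonian $H_k=x_k'Qx_k+u_k'Ru_k+\lambda_k'(Ax_k+Bu_k)+\beta_k'(K_kx_k-u_k)$ and differentiates in $u_k$ and $x_k$, whereas you write out the full summed Lagrangian and vary $x_k,x_{N+1},u_k$; these are equivalent presentations, and your additional step (v) explaining why the $K_k$-variation leaves $\beta_k$ underdetermined is a useful clarification that the paper's proof omits.
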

\begin{proof}
For the above optimal control problem with equality constraint, introducing Lagrange multipliers $\lambda_{k}$ and $\beta_{k}$, accordingly, define the following Hamiltonian functional
\begin{eqnarray}
H_{k}=x'_{k}Qx_{k}+u_{k}'Ru_{k}+\lambda_{k}'(Ax_{k}+Bu_{k})
+\beta_{k}'(K_{k}x_{k}-u_{k}).\label{1251}
\end{eqnarray}
Now, the constrained LQR problem is equivalently transformed unconstrained optimization problem. By taking partial derivative of $u_{k}$ and $x_{k}$ for \rf{1251}, we obtain
\begin{eqnarray}
0&=&\frac{\partial H_{k}}{\partial u_{k}}=Ru_{k}+B'\lambda_{k}-\beta_{k},
\label{1252}\\
\lambda_{k-1}&=&\frac{\partial H_{k}}{\partial x_{k}}=Qx_{k}+A'\lambda_{k}+K_{k}'\beta_{k}, \ \
\lambda_{N}=P_{N+1}x_{N+1},\label{1260}
\end{eqnarray}
which are consistent with \rf{2} and \rf{02}.
\end{proof}
A similar proof also be found in \cite{Bryson75}. Next, we derive the controller using Lemma \ref{l1}. Firstly, we consider $k=N$. From \rf{2} with $k=N$, we have that
\begin{eqnarray}
0&=&Ru_{N}+B'P_{N+1}(Ax_{N}+Bu_{N})-\beta_{N}\no\\
&=&(R+B'P_{N+1}B)u_{N}+B'P_{N+1}Ax_{N}-\beta_{N}.\no
\end{eqnarray}
Suppose that $R+B'P_{N+1}B$ is positive definite, it yields
\begin{eqnarray}
u_{N}=-(R+B'P_{N+1}B)^{-1}B'P_{N+1}Ax_{N}+(R+B'P_{N+1}B)^{-1}\beta_{N}.\label{12132}
\end{eqnarray}
Consider the linear form $u_{N}=K_{N}x_{N}$, combining \rf{12132}, we get
\begin{eqnarray}
[K_{N}+(R+B'P_{N+1}B)^{-1}B'P_{N+1}A]x_{N}-(R+B'P_{N+1}B)^{-1}\beta_{N}=0.\no
\end{eqnarray}
Due to the fact that the above relationship holds for any $x_{N}$, it deduces that
\begin{eqnarray}
K_{N}=-(R+B'P_{N+1}B)^{-1}B'P_{N+1}A, \ \ \beta_{N}=0,\label{12134}
\end{eqnarray}
i.e.,
\begin{eqnarray}
u_{N}=-(R+B'P_{N+1}B)^{-1}B'P_{N+1}Ax_{N}.\label{12133}
\end{eqnarray}
Based on this, from \rf{2}, we have
\begin{eqnarray}
\lambda_{N-1}=(Q+A'P_{N+1}A-A'P_{n+1}B(R+B'P_{n+1}B)^{-1}B'P_{n+1}A)x_{N}.\no
\end{eqnarray}
Following the line of $k=N$, by mathematical induction, we can deduce that the optimal solution is
\begin{eqnarray}
u_{k}=-(R+B'P_{k+1}B)^{-1}B'P_{k+1}Ax_{k}.\label{1288}
\end{eqnarray}
where $P_{k}=Q+A'P_{k+1}A-A'P_{k+1}B(R+B'P_{k+1}B)^{-1}B'P_{k+1}A$ with terminal value $P_{N+1}$. Moreover, the relationship between state and co-state is
\begin{eqnarray}
\lambda_{k-1}=P_{k}x_{k}.\label{12135}
\end{eqnarray}
\begin{remark}\label{r1}
 It is clear that the derived LQR controller  with constraint of current-state feedback is the same as the standard LQR without constraint. However, it would be not the case for the non-standard LQR problem like the Stackelberg game control to be studied in this paper.
\end{remark}
Inspired by the above constrained LQR problem, we study Problem 1 in the sequel.

\subsection{Results of the follower}
Following the sequential nature of a Stackelberg game, we first solve the follower's optimization problem. In this case, from \rf{f2.6}, system \rf{f2.1} and cost functional \rf{f2.3} can be rewritten as follows.
\begin{eqnarray}
x_{k+1}=(A+B_{2}\bar{K}_{k})x_{k}+B_{1}u^{f}_{k}+B_{2}\bar{G}_{k}x_{k-1}, \label{1283}
\end{eqnarray}
and
\begin{eqnarray}
J^{f}_{N}&=&\sum^{N}_{k=0}[x'_{k}(Q+\bar{K}_{k}'R_{2}\bar{K}_{k})x_{k}
+(u^{f}_{k})'R_{1}u^{f}_{k}+2x'_{k}\bar{K}_{k}'R_{2}\bar{G}_{k}x_{k-1}
 +x'_{k-1}\bar{G}_{k}'R_{2}\bar{G}_{k}x_{k-1}]\no\\
 &&
+x'_{N+1}P_{N+1}x_{N+1}.\label{1284}
\end{eqnarray}
Consider the follower's minimization problem with the linear constraint condition \rf{f2.5}, by introducing the Lagrange multipliers $\lambda^{f}_{k}$ and $\beta^{f}_{k}$, we define the following Hamiltonian functional
\begin{eqnarray}
 H^{f}_{k}&=&
x'_{k}Qx_{k}+(u^{f}_{k})'R_{1}u^{f}_{k}
 +x'_{k}\bar{K}_{k}'R_{2}\bar{K}_{k}x_{k}+2x'_{k}\bar{K}_{k}'R_{2}\bar{G}_{k}x_{k-1}
 +x'_{k-1}\bar{G}_{k}'R_{2}\bar{G}_{k}x_{k-1}\no\\
 &&+(\lambda^{f}_{k})'(Ax_{k}+B_{1}u^{f}_{k}+B_{2}\bar{K}_{k}x_{k}+B_{2}\bar{G}_{k}x_{k-1})
 +(\beta^{f}_{k})'(K_{k}x_{k}+G_{k}x_{k-1}-u^{f}_{k}).\label{f2.7}
\end{eqnarray}
Similar to the line for Lemma \ref{l1}, we have the following Lemma.
\begin{lemma}\label{l81}
Suppose the follower's optimization problem admits a solution, then, the controller $u^{f}_{k}$ satisfies
\begin{eqnarray}
0&=&R_{1}u^{f}_{k}+B_{1}'\lambda^{f}_{k}-\beta^{f}_{k},\label{f2.8}
\end{eqnarray}
where $\lambda^{f}_k$ is as
\begin{eqnarray}
\lambda^{f}_{k-1}&=&[Q+\bar{K}_{k}'R_{2}\bar{K}_{k}]x_{k}
+\bar{K}_{k}'R_{2}\bar{G}_{k}x_{k-1}+\bar{G}_{k+1}'R_{2}\bar{K}_{k+1}x_{k+1}
+\bar{G}_{k+1}'R_{2}\bar{G}_{k+1}x_{k}\no\\
&&+[A+B_{2}\bar{K}_{k}]'\lambda^{f}_{k}+\bar{G}_{k+1}'B_{2}'\lambda^{f}_{k+1}
+K_{k}'\beta^{f}_{k}+G_{k+1}'\beta^{f}_{k+1},\label{1264}\\
\lambda^{f}_{N}&=&P_{N+1}x_{N+1}\label{1265}
\end{eqnarray}
with $\bar{G}_{0}=\bar{G}_{N+1}=0$. And $\beta^{f}_{k}$ is arbitrary to be determined.
\end{lemma}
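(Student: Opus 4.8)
The plan is to mirror the argument used for Lemma~\ref{l1}, now applied to the follower's reformulated optimal control problem \rf{1283}--\rf{1284} with the linear constraint \rf{f2.5}. First I would treat $\bar K_k$ and $\bar G_k$ as fixed (the leader has announced its strategy), so that \rf{1283} is a genuine linear dynamical system in $x_k$ driven by $u^f_k$, but with the twist that the ``state'' at time $k$ effectively involves both $x_k$ and $x_{k-1}$ because of the one-step memory term $B_2\bar G_k x_{k-1}$, and similarly the running cost couples $x_k$ with $x_{k-1}$. The key point is that $x_{k-1}$ enters the dynamics for $x_k$ (hence influences $x_k,x_{k+1},\dots$) and the cost at \emph{both} stage $k$ (through the cross term and the $\bar G_k$ quadratic term) and stage $k+1$ is where $x_k$ plays the role of the ``previous state.'' So when I form the discrete Hamiltonian \rf{f2.7} and take $\partial H^f_k/\partial x_k$, I must remember that $x_k$ appears in $H^f_k$ and also in $H^f_{k+1}$ (as the lagged state), and that the costate recursion picks up contributions from both.

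Concretely, the steps are: (1) write the Lagrangian as $\sum_{k=0}^N H^f_k - \sum_k (\lambda^f_k)' x_{k+1}$ in the usual way, with $\lambda^f_k$ enforcing the dynamics and $\beta^f_k$ enforcing the constraint $u^f_k = K_k x_k + G_k x_{k-1}$; (2) take the variation with respect to $u^f_k$, which immediately gives the stationarity condition \rf{f2.8}, $0 = R_1 u^f_k + B_1'\lambda^f_k - \beta^f_k$, since $u^f_k$ appears only in $H^f_k$ through $(u^f_k)'R_1 u^f_k$, $(\lambda^f_k)'B_1 u^f_k$, and $-(\beta^f_k)'u^f_k$; (3) take the variation with respect to $x_k$ for $0 < k \le N$, collecting \emph{all} terms in which $x_k$ appears. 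These are: the $x_k$-terms in $H^f_k$ itself --- namely $Qx_k$, $\bar K_k'R_2\bar K_k x_k$, $\bar K_k'R_2\bar G_k x_{k-1}$ (wait, that is the lagged contribution seen from stage $k$), the costate term $[A+B_2\bar K_k]'\lambda^f_k$ and the constraint term $K_k'\beta^f_k$ --- together with the $x_k$-terms that appear in $H^f_{k+1}$ \emph{because $x_k$ is the lagged state there}: $\bar G_{k+1}'R_2\bar K_{k+1}x_{k+1}$, $\bar G_{k+1}'R_2\bar G_{k+1}x_k$, $\bar G_{k+1}'B_2'\lambda^f_{k+1}$, and $G_{k+1}'\beta^f_{k+1}$ --- plus the dynamics link $-\lambda^f_{k-1}$ coming from $-(\lambda^f_{k-1})'x_k$. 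Setting the total to zero and solving for $\lambda^f_{k-1}$ reproduces \rf{1264}. Finally, (4) the terminal condition: $x_{N+1}$ appears only in $x_{N+1}'P_{N+1}x_{N+1}$ and in $-(\lambda^f_N)'x_{N+1}$, giving $\lambda^f_N = P_{N+1}x_{N+1}$, which is \rf{1265}; and the boundary conventions $\bar G_0 = 0$ (already imposed in \rf{f2.6}) and $\bar G_{N+1}=0$, $G_{N+1}=0$ are needed so that the stage-$(N+1)$ lagged terms vanish and the recursion \rf{1264} is valid up through $k=N$.

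The main obstacle — really the only subtle point — is the bookkeeping in step (3): one has to be careful that $x_k$ enters the Hamiltonian at \emph{two} consecutive stages because of the memory structure, so the costate equation is not the textbook one-step recursion but picks up the extra ``forward-looking'' terms indexed by $k+1$. Once that is recognized, everything is a routine partial-derivative computation exactly parallel to the proof of Lemma~\ref{l1}, and the only analytic hypothesis invoked is that the follower's constrained problem admits a solution, which legitimizes the Lagrange-multiplier (constrained maximum principle) formulation. I would also note in passing that, just as $\beta_k$ in Lemma~\ref{l1} turned out to be determined (indeed zero) a posteriori by the requirement that the stationarity condition be consistent with the assumed linear feedback form, here $\beta^f_k$ is likewise left ``arbitrary to be determined'' at this stage and will be pinned down later when the linear closed-loop structure \rf{f2.5} is imposed and matched against \rf{f2.8}.
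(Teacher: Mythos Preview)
Your proposal is correct and follows essentially the same approach as the paper, which in fact does not give an explicit proof of this lemma but merely states that it follows ``similar to the line for Lemma~\ref{l1}.'' Your write-up is actually more informative than the paper's one-line gesture: you correctly identify and articulate the only genuinely new wrinkle, namely that because of the one-step memory structure the variable $x_k$ appears in both $H^f_k$ and $H^f_{k+1}$, so the costate recursion \rf{1264} acquires the forward-looking terms indexed by $k+1$ (the $\bar G_{k+1}'R_2\bar K_{k+1}x_{k+1}$, $\bar G_{k+1}'R_2\bar G_{k+1}x_k$, $\bar G_{k+1}'B_2'\lambda^f_{k+1}$, and $G_{k+1}'\beta^f_{k+1}$ contributions), together with the boundary conventions $\bar G_{N+1}=G_{N+1}=0$ needed at the terminal stage.
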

In the following, we shall solve the controller $u^{f}_{k}$ from Lemma \ref{l81}. To this end, we firstly define the following Riccati equation
\begin{eqnarray}
    P_{k}&=&Q+\bar{K}_{k}'R_{2}\bar{K}_{k}+(A+B_{2}\bar{K}_{k})'P_{k+1}(A+B_{2}\bar{K}_{k})
+(A+B_{2}\bar{K}_{k})'S_{k+1}'+S_{k+1}(A+B_{2}\bar{K}_{k})\no\\
&&
-(M^{1}_{k}+M^{B}_{k}\bar{K}_{k}+B_{1}'S_{k+1}')'(\Gamma^{f}_{k})^{-1}
(M^{1}_{k}+M^{B}_{k}\bar{K}_{k}+B_{1}'S_{k+1}')
+\bar{G}_{k+1}'\Delta_{k+1}\bar{G}_{k+1},\label{2.9}\\
S_{k}
&=&\bar{G}_{k}'\Delta_{k}\bar{K}_{k}+\bar{G}_{k}'\mathcal{M}^{2}_{k},\label{2.10}
\end{eqnarray}
where
\begin{eqnarray}
M^{i}_{k}&=&B_{i}'P_{k+1}A,\ \ i=1,2, \label{2.111}\\
M^{B}_{k}&=&B_{1}'P_{k+1}B_{2},\\
\Gamma^{f}_{k}&=&R_{1}+B_{1}'P_{k+1}B_{1},\label{2.112}\\
\Delta_{k}&=&R_{2}+ B_{2}'P_{k+1}B_{2}-(M^{B}_{k})'(\Gamma^{f}_{k})^{-1}M^{B}_{k}, \label{2.113}\\
\mathcal{M}^{2}_{k}&=&
M^{2}_{k}-(M^{B}_{k})'(\Gamma^{f}_{k})^{-1}M^{1}_{k}+[-B_{1}(\Gamma^{f}_{k})^{-1}M^{B}_{k}
+B_{2}]'S_{k+1}\label{2.11}
\end{eqnarray}
with terminal values $P_{N+1}, S_{N+1}=0$. In the above, $\bar{K}_{k}$ and $\bar{G}_{k}$ are the gain matrices of the leader as defined  in \rf{f2.6}.

Accordingly, we give the main result of the follower's optimization problem.
\begin{theorem}\label{t1}
If Riccati equations (\ref{2.9}) and (\ref{2.10}) have a solution such that $\Gamma^{f}_{k}>0$, then the follower's optimal controller is
\begin{eqnarray}
u^{f}_{k}&=&-(\Gamma^{f}_{k})^{-1}[M^{1}_{k}+M^{B}_{k}\bar{K}_{k}+B_{1}'S_{k+1}']x_{k}
-(\Gamma^{f}_{k})^{-1}M^{B}_{k}\bar{G}_{k}x_{k-1},\label{2.12}
\end{eqnarray}
and the co-state $\lambda_{k-1}$ is given as follows.
\begin{eqnarray}
\lambda_{k-1}&=&P_{k}x_{k}+S_{k}x_{k-1}.\label{2.13}
\end{eqnarray}
Moreover, the follower's optimal cost value is
\begin{eqnarray}
J^{f\ast}=x_{0}'P_{0}x_{0}.\label{f2.21}
\end{eqnarray}
\end{theorem}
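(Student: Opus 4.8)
The plan is to verify Theorem \ref{t1} by substituting the proposed forms (\ref{2.12}) and (\ref{2.13}) into the necessary conditions of Lemma \ref{l81} and confirming they hold, then propagating the induction backward from $k=N$. First I would establish the base case: at $k=N$ the memory terms $\bar{G}_{N+1}=G_{N+1}=0$ and $S_{N+1}=0$, so (\ref{1264}) reduces to the one-step relation $\lambda^{f}_{N-1}=[Q+\bar{K}_N'R_2\bar{K}_N]x_N+\bar{K}_N'R_2\bar{G}_Nx_{N-1}+(A+B_2\bar{K}_N)'\lambda^f_N+K_N'\beta^f_N$, and together with $\lambda^f_N=P_{N+1}x_{N+1}$, (\ref{f2.8}), and the linear constraint (\ref{f2.5}), one solves for $u^f_N$ exactly as in the scalar warm-up: collecting the $x_N$ and $x_{N-1}$ coefficients gives $\Gamma^f_N u^f_N = -(M^1_N+M^B_N\bar{K}_N)x_N - M^B_N\bar{G}_Nx_{N-1}$ (the $S_{N+1}$ terms vanishing), matching (\ref{2.12}), and forces $\beta^f_N=0$; substituting back into (\ref{1264}) yields $\lambda^f_{N-1}=P_Nx_N+S_Nx_{N-1}$ with $P_N,S_N$ as in (\ref{2.9})--(\ref{2.10}).

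Next I would carry out the inductive step. Assume $\lambda^f_k = P_{k+1}x_{k+1}+S_{k+1}x_k$ holds (so (\ref{2.13}) is valid at index $k+1$). Substitute this, together with the closed-loop dynamics (\ref{1283}) rewritten using $u^f_k = K_kx_k+G_kx_{k-1}$, into the stationarity equation (\ref{f2.8}): one gets $R_1 u^f_k + B_1'[P_{k+1}((A+B_2\bar{K}_k)x_k+B_1u^f_k+B_2\bar{G}_kx_{k-1})+S_{k+1}x_k] = \beta^f_k$. Here the crucial observation — paralleling Remark \ref{r1} — is that for the Stackelberg problem $\beta^f_k$ need not vanish; rather, the linear constraint $u^f_k=K_kx_k+G_kx_{k-1}$ and the requirement that the resulting identity hold for all $x_k,x_{k-1}$ pin down $u^f_k$ in the form (\ref{2.12}) while leaving $\beta^f_k$ determined (generically nonzero) as the residual. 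Collecting coefficients of $x_k$: $(\Gamma^f_k) K_k + M^1_k + M^B_k\bar{K}_k + B_1'S_{k+1}' = $ (something absorbed into $\beta^f_k$), and similarly for $x_{k-1}$; matching with (\ref{2.12}) identifies $\beta^f_k$ explicitly. Then plugging $u^f_k$, $\lambda^f_k$, $\lambda^f_{k+1}$, $\beta^f_k$, $\beta^f_{k+1}$ and the constraint relations back into the costate recursion (\ref{1264}) and grouping the $x_k$ versus $x_{k-1}$ terms should reproduce exactly (\ref{2.9}) and (\ref{2.10}) after using the definitions (\ref{2.111})--(\ref{2.11}); this is the bookkeeping-heavy algebraic core.

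The cost identity (\ref{f2.21}) I would get by the standard completion-of-squares / telescoping argument: using $x'_{N+1}P_{N+1}x_{N+1} = \lambda^{f\prime}_N x_{N+1}$ and the recursions, write $J^f_N$ as $x_0'P_0x_0$ plus a sum of terms each of which vanishes along the optimal trajectory by virtue of (\ref{f2.8}) and (\ref{1264}); care is needed with the cross terms $2x'_k\bar{K}_k'R_2\bar{G}_kx_{k-1}$ and the memory contributions $\bar{G}_{k+1}'\Delta_{k+1}\bar{G}_{k+1}$ appearing in $P_k$, but these are precisely accounted for by the $S_k$ and $\Delta_k$ terms built into the Riccati system.

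The main obstacle I anticipate is the algebraic verification of the costate recursion collapsing to (\ref{2.9})--(\ref{2.10}): unlike the clean LQR warm-up, the presence of the one-step memory means the costate at stage $k-1$ in (\ref{1264}) couples $x_{k+1}$, $x_k$, and $x_{k-1}$ through the $\bar{G}_{k+1}$ terms, the auxiliary multipliers $\beta^f_k,\beta^f_{k+1}$ do not drop out and must be tracked explicitly, and one must substitute the closed-loop expression for $x_{k+1}$ to reduce everything to $x_k$ and $x_{k-1}$ before equating coefficients. Keeping the matrix $M^B_k$, $\Delta_k$, $\mathcal{M}^2_k$ substitutions consistent so that the Schur-complement structure in (\ref{2.113}) emerges correctly will be the delicate part; everything else is routine backward induction.
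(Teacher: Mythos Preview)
Your backward-induction skeleton and the base case match the paper's proof exactly: start at $k=N$, use $\lambda^f_N=P_{N+1}x_{N+1}$ in \eqref{f2.8} to solve for $u^f_N$, conclude $\beta^f_N=0$, then compute $\lambda^f_{N-1}=P_Nx_N+S_Nx_{N-1}$. The inductive step, however, rests on a misconception. You write that ``for the Stackelberg problem $\beta^f_k$ need not vanish'' and will be ``generically nonzero,'' and you list tracking $\beta^f_k,\beta^f_{k+1}$ through \eqref{1264} as the main obstacle. This is wrong: the paper shows $\beta^f_k=0$ at \emph{every} step. Once you substitute the inductive hypothesis $\lambda^f_k=P_{k+1}x_{k+1}+S_{k+1}x_k$ into \eqref{f2.8} you get
\[
\Gamma^f_k\,u^f_k + (M^1_k+M^B_k\bar K_k+B_1'S_{k+1}')x_k + M^B_k\bar G_k\,x_{k-1}=\beta^f_k,
\]
and the expression \eqref{2.12} is already of the admissible form $K_kx_k+G_kx_{k-1}$; the follower's structural constraint is therefore inactive at the optimum and $\beta^f_k=0$. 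Your own sentence ``matching with \eqref{2.12} identifies $\beta^f_k$ explicitly'' would, if carried out, return zero. Remark~\ref{r1} is not asserting a nonzero multiplier for the follower; the nonstandard feature of the Stackelberg problem is the dependence of the Riccati data $P_k,S_k$ on the leader's gains $\bar K_k,\bar G_k$, which is what Theorem~\ref{t3} addresses --- not a residual $\beta^f_k$.

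This does not destroy your argument, but it means the algebra you are bracing for is much lighter than you anticipate: in \eqref{1264} the terms $K_k'\beta^f_k$ and $G_{k+1}'\beta^f_{k+1}$ simply vanish, and the verification that $\lambda^f_{n-1}=P_nx_n+S_nx_{n-1}$ reduces to substituting the closed-loop expression for $x_{n+1}$ (and $u^f_{n+1}$ via \eqref{2.12}) and collecting terms --- exactly the long but mechanical computation the paper performs. Note also that your auxiliary justification ``the resulting identity hold for all $x_k,x_{k-1}$'' is not valid as stated, since $x_k$ and $x_{k-1}$ are linked along trajectories; this is precisely the content of Remark~\ref{r2}. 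What actually pins down $u^f_k$ is that the unconstrained stationary point of \eqref{f2.8} already lies in the admissible one-step-memory class, which is why $\beta^f_k=0$ and why the individual gains $K_k,G_k$ remain non-unique even though $u^f_k$ is.
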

\begin{proof}
We adopt the backward iteration method to find the optimal controller.
When $k=N$, from \rf{f2.8}, we get
\begin{eqnarray}
0&=&R_{1}u^{f}_{N}+B_{1}'P_{N+1}(Ax_{N}+B_{1}u^{f}_{N}+B_{2}\bar{K}_{N}x_{N}
+B_{2}\bar{G}_{N}x_{N-1})-\beta^{f}_{N}\no\\
&=&(R_{1}+B_{1}'P_{N+1}B_{1})u^{f}_{N}+B_{1}'P_{N+1}(A+B_{2}\bar{K}_{N})x_{N}
+B_{1}'P_{N+1}B_{2}\bar{G}_{N}x_{N-1}-\beta^{f}_{N},\label{f2.16}
\end{eqnarray}
i.e.,
\begin{eqnarray}
u^{f}_{N}&=&-(\Gamma^{f}_{N})^{-1}[M^{1}_{N}+M^{B}_{N}\bar{K}_{N}]x_{N}
-(\Gamma^{f}_{N})^{-1}M^{B}_{N}\bar{G}_{N}x_{N-1}
+(\Gamma^{f}_{N})^{-1}\beta^{f}_{N}.\label{f2.17}
\end{eqnarray}
From the linear closed-loop form, i.e., $u^{f}_{N}=K_{N}x_{N}+G_{N}x_{N-1}$, we derive that
\begin{eqnarray}
[K_{N}+(\Gamma^{f}_{N})^{-1}(M^{1}_{N}+M^{B}_{N}\bar{K}_{N})]x_{N}
+[G_{N}+(\Gamma^{f}_{N})^{-1}M^{B}_{N}\bar{G}_{N}]x_{N-1}
-(\Gamma^{f}_{N})^{-1}\beta^{f}_{N}=0.\label{f2.17}
\end{eqnarray}
Note that \rf{f2.17} holds for any $\beta^{f}_{N}$ 
, it yields that
\begin{eqnarray}
[K_{N}+(\Gamma^{f}_{N})^{-1}(M^{1}_{N}+M^{B}_{N}\bar{K}_{N})]x_{N}
+[G_{N}+(\Gamma^{f}_{N})^{-1}M^{B}_{N}\bar{G}_{N}]x_{N-1}=0.\no
\end{eqnarray}
That is, the optimal controller is given by
\begin{eqnarray}
u^{f}_{N}&=&K_{N}x_{N}+G_{N}x_{N-1}\no\\
&=&-(\Gamma^{f}_{N})^{-1}[M^{1}_{N}+M^{B}_{N}\bar{K}_{N}]x_{N}
-(\Gamma^{f}_{N})^{-1}M^{B}_{N}\bar{G}_{N}x_{N-1}.\label{1257}
\end{eqnarray}
In this case, consider \rf{f2.17} again, it deduces that $(\Gamma^{f}_{N})^{-1}\beta^{f}_{N}=0$. Due to the invertibility of $\Gamma^{f}_{N}$, obviously, it yields that $\beta^{f}_{N}=0$.

From \rf{1264}, we have
\begin{eqnarray}
\lambda^{f}_{N-1}&=&[Q+\bar{K}_{N}'R_{2}\bar{K}_{N}]x_{N}
+\bar{K}_{N}'R_{2}\bar{G}_{N}x_{N-1}
+[A+B_{2}\bar{K}_{N}]'P_{N+1}[(A+B_{2}\bar{K}_{N})x_{N}\no\\
&&-B_{1}(\Gamma^{f}_{N})^{-1}
(M^{1}_{N}+M^{B}_{N}\bar{K}_{N})x_{N}-B_{1}(\Gamma^{f}_{N})^{-1}M^{B}_{N}\bar{G}_{N}x_{N-1}
+B_{2}\bar{G}_{N}x_{N-1}]\no\\
&=&\{Q+\bar{K}_{N}'R_{2}\bar{K}_{N}+(A+B_{2}\bar{K}_{N})'P_{N+1}(A+B_{2}\bar{K}_{N})
-(M^{1}_{N}+M^{B}_{N}\bar{K}_{N})'(\Gamma^{f}_{N})^{-1}\no\\
&&\times(M^{1}_{N}+M^{B}_{N}\bar{K}_{N})\}x_{N}
+\{\bar{K}_{N}'[R_{2}+B_{2}'P_{N+1}B_{2}-(M^{B}_{N})'(\Gamma^{f}_{N})^{-1}M^{B}_{N}]\no\\
&&
-(M^{1}_{N})'(\Gamma^{f}_{N})^{-1}M^{B}_{N}
+(M^{2}_{N})'\}\bar{G}_{N}x_{N-1},\no\\
&=&P_{N}x_{N}+S_{N}x_{N-1}.\label{f2.18}
\end{eqnarray}

For any $n$ with $0\leq n\leq N$, we assume that $\Gamma^{f}_{k}>0$ and the follower's optimal controller $u^{f}_{k}$ and co-state $\lambda^{f}_{k-1}$ are respectively given as in \rf{2.12} and \rf{2.13} for $k>n$.  Now we show $u^f_{k}$ and $\lambda^{f}_{k-1}$ have the same  expression as \rf{2.12} and \rf{2.13} for $k=n$, respectively.  For $u^{f}_{n}$, from \rf{f2.8}, we have that
\begin{eqnarray}
0&=&R_{1}u^{f}_{n}+B_{1}'P_{n+1}[(A+B_{2}\bar{K}_{n})x_{n}+B_{1}u^{f}_{n}
+B_{2}\bar{G}_{n}x_{n-1}]+B_{1}'S_{n+1}x_{n}-\beta^{f}_{n}\no\\
&=&[R_{1}+B_{1}'P_{n+1}B_{1}]u^{f}_{n}+[(M^{1}_{n}+M^{B}_{n}\bar{K}_{n})+B_{1}'S_{n+1}']x_{n}
+M^{B}_{n}\bar{G}_{n}x_{n-1}-\beta^{f}_{n},\no
\end{eqnarray}
it yields that
\begin{eqnarray}
u^{f}_{n}&=&-(\Gamma^{f}_{n})^{-1}[M^{1}_{n}+M^{B}_{n}\bar{K}_{n}+B_{1}'S_{n+1}']x_{n}
-(\Gamma^{f}_{n})^{-1}M^{B}_{n}\bar{G}_{n}x_{n-1}
+(\Gamma^{f}_{n})^{-1}\beta^{f}_{n},\no
\end{eqnarray}
combining that linear form $u^{f}_{n}=K_{n}x_{n}+G_{n}x_{n-1}$, hence, we have that
\begin{eqnarray}
[K_{n}+(\Gamma^{f}_{n})^{-1}(M^{1}_{n}+M^{B}_{n}\bar{K}_{n}+B_{1}'S_{n+1}')]x_{n}
+[G_{n}+(\Gamma^{f}_{n})^{-1}M^{B}_{n}\bar{G}_{n}]x_{n-1}
-(\Gamma^{f}_{n})^{-1}\beta^{f}_{n}=0,\label{121313}
\end{eqnarray}
Note that \rf{121313} holds for any $\beta^{f}_{n}$, it deduces that
\begin{eqnarray}
[K_{n}+(\Gamma^{f}_{n})^{-1}(M^{1}_{n}+M^{B}_{n}\bar{K}_{n}+B_{1}'S_{n+1}')]x_{n}
+[G_{n}+(\Gamma^{f}_{n})^{-1}M^{B}_{n}\bar{G}_{n}]x_{n-1}=0.\no
\end{eqnarray}
That is, the optimal controller is given by
\begin{eqnarray}
u^{f}_{n}&=&K_{n}x_{n}+G_{n}x_{n-1}\no\\
&=&-(\Gamma^{f}_{n})^{-1}(M^{1}_{n}+M^{B}_{n}\bar{K}_{n}+B_{1}'S_{n+1}')x_{n}
-(\Gamma^{f}_{n})^{-1}M^{B}_{n}\bar{G}_{n}x_{n-1}.\label{f2.20}
\end{eqnarray}
In this case, from \rf{121313}, due to the invertibility of $\Gamma^{f}_{n}$, it yields that $\beta^{f}_{n}=0$. Thus, by induction it is easy to know  the controller  $u_k^f$  for any $0\leq k\leq N$ has the form as \rf{2.12}.

Next, we derive the expression of $\lambda^{f}_{n-1}$. In view of \rf{1264}, it follows
\begin{small}\begin{eqnarray}
\lambda^{f}_{n-1}&=&(Q+\bar{K}_{n}'R_{2}\bar{K}_{n})x_{n}
+\bar{K}_{n}'R_{2}\bar{G}_{n}x_{n-1}+\bar{G}_{n+1}'R_{2}\bar{K}_{n+1}x_{n+1}
\no\\
&&+\bar{G}_{n+1}'R_{2}\bar{G}_{n+1}x_{n}+(A+B_{2}\bar{K}_{n})'\lambda^{f}_{n}
+\bar{G}_{n+1}'B_{2}'\lambda^{f}_{n+1}\no\\
&=&(Q+\bar{K}_{n}'R_{2}\bar{K}_{n})x_{n}
+\bar{K}_{n}'R_{2}\bar{G}_{n}x_{n-1}+\bar{G}_{n+1}'R_{2}\bar{K}_{n+1}x_{n+1}
\no\\
&&+\bar{G}_{n+1}'R_{2}\bar{G}_{n+1}x_{n}+(A+B_{2}\bar{K}_{n})'P_{n+1}x_{n+1}
+(A+B_{2}\bar{K}_{n})'S_{n+1}x_{n}\no\\
&&
+\bar{G}_{n+1}'B_{2}'P_{n+2}[(A+B_{2}\bar{K}_{n+1})x_{n+1}+B_{1}u^{f}_{n+1}
+B_{2}\bar{G}_{n+1}x_{n}]+\bar{G}_{n+1}'B_{2}'S_{n+2}x_{n+1}\no\\
&=&\{Q+\bar{K}_{n}'R_{2}\bar{K}_{n}+\bar{G}_{n+1}'R_{2}\bar{G}_{n+1}
+(A+B_{2}\bar{K}_{n})'S_{n+1}+\bar{G}_{n+1}'B_{2}'P_{n+2}B_{2}\bar{G}_{n+1}\no\\
&&
-\bar{G}_{n+1}'B_{2}'P_{n+1}B_{1}(\Gamma^{f}_{n+1})^{-1}M^{B}_{n+1}\bar{G}_{n+1}\}x_{n}
+\bar{K}_{n}'R_{2}\bar{G}_{n}x_{n-1}
+\{\bar{G}_{n+1}'R_{2}\bar{K}_{n+1}\no\\
&&+(A+B_{2}\bar{K}_{n})'P_{n+1}+\bar{G}_{n+1}'B_{2}'P_{n+2}(A+B_{2}\bar{K}_{n+1})
-\bar{G}_{n+1}'B_{2}'P_{n+2}B_{1}(\Gamma^{f}_{n+1})^{-1}\no\\
&&\times[M^{1}_{n+1}+M^{B}_{n+1}\bar{K}_{n+1}]+\bar{G}_{n+1}'B_{2}'S_{n+2}\}x_{n+1}
\no\\
&=&\{Q+\bar{K}_{n}'R_{2}\bar{K}_{n}+\bar{G}_{n+1}'R_{2}\bar{G}_{n+1}
+(A+B_{2}\bar{K}_{n})'S_{n+1}+\bar{G}_{n+1}'B_{2}'P_{n+2}B_{2}\bar{G}_{n+1}\no\\
&&-\bar{G}_{n+1}'B_{2}'P_{n+1}B_{1}(\Gamma^{f}_{n+1})^{-1}M^{B}_{n+1}\bar{G}_{n+1}\}x_{n}
+\bar{K}_{n}'R_{2}\bar{G}_{n}x_{n-1}\no\\
&&
+\{\bar{G}_{n+1}'R_{2}\bar{K}_{n+1}+(A+B_{2}\bar{K}_{n})'P_{n+1}
+\bar{G}_{n+1}'B_{2}'P_{n+2}(A+B_{2}\bar{K}_{n+1})\no\\
&&
-\bar{G}_{n+1}'B_{2}'P_{n+2}B_{1}(\Gamma^{f}_{n+1})^{-1}[M^{1}_{n+1}+M^{B}_{n+1}\bar{K}_{n+1}]
+\bar{G}_{n+1}'B_{2}'S_{n+2}\}
\{(A+B_{2}\bar{K}_{n})x_{n}\no\\
&&-B_{1}(\Gamma^{f}_{n})^{-1}[(M^{1}_{n}+M^{B}_{n}\bar{K}_{n})
+B_{1}'S_{n+1}']x_{n}
-B_{1}(\Gamma^{f}_{n})^{-1}M^{B}_{n}\bar{G}_{n}x_{n-1}
+B_{2}\bar{G}_{n}x_{n-1}\}
\no\\
&=&\Big\{Q+\bar{K}_{n}'R_{2}\bar{K}_{n}+\bar{G}_{n+1}'R_{2}\bar{G}_{n+1}
+(A+B_{2}\bar{K}_{n})'S_{n+1}+\bar{G}_{n+1}'B_{2}'P_{n+2}B_{2}\bar{G}_{n+1}\no\\
&&-\bar{G}_{n+1}'(M^{B}_{n+1})'(\Gamma^{f}_{n+1})^{-1}M^{B}_{n+1}\bar{G}_{n+1}
+\{\bar{G}_{n+1}'R_{2}\bar{K}_{n+1}+(A+B_{2}\bar{K}_{n})'P_{n+1}\no\\
&&+\bar{G}_{n+1}'B_{2}'P_{n+2}(A+B_{2}\bar{K}_{n+1})
-\bar{G}_{n+1}'B_{2}'P_{n+2}B_{1}(\Gamma^{f}_{n+1})^{-1}[M^{1}_{n+1}+M^{B}_{n+1}\bar{K}_{n+1}]
\no\\
&&+\bar{G}_{n+1}'B_{2}'S_{n+2}\}\{(A+B_{2}\bar{K}_{n})-B_{1}(\Gamma^{f}_{n})^{-1}[(M^{1}_{n}+M^{B}_{n}
\bar{K}_{n})+B_{1}'S_{n+1}']\}
\Big\}x_{n}\no\\
&&
+\Big\{\bar{K}_{n}'R_{2}\bar{G}_{n}+\{\bar{G}_{n+1}'R_{2}\bar{K}_{n+1}+(A+B_{2}\bar{K}_{n})'
P_{n+1}+\bar{G}_{n+1}'B_{2}'P_{n+2}(A+B_{2}\bar{K}_{n+1})\no\\
&&
-\bar{G}_{n+1}'B_{2}'P_{n+2}B_{1}(\Gamma^{f}_{n+1})^{-1}[M^{1}_{n+1}+M^{B}_{n+1}\bar{K}_{n+1}]
+\bar{G}_{n+1}'B_{2}'S_{n+2}\}
[-B_{1}(\Gamma^{f}_{n})^{-1}M^{B}_{n}\bar{G}_{n}
+B_{2}\bar{G}_{n}]\Big\}x_{n-1}\no\\
&=&\Big\{Q+\bar{K}_{n}'R_{2}\bar{K}_{n}+\bar{G}_{n+1}'[R_{2}+B_{2}'P_{n+2}B_{2}
-(M^{B}_{n+1})'(\Gamma^{f}_{n+1})^{-1}M^{B}_{n+1}]\bar{G}_{n+1}
+(A+B_{2}\bar{K}_{n})'S_{n+1}\no\\
&&+\{\bar{G}_{n+1}'[R_{2}+B_{2}'P_{n+2}B_{2}
-(M^{B}_{n+1})'(\Gamma^{f}_{n+1})^{-1}M^{B}_{n+1}]\bar{K}_{n+1}
+\bar{G}_{n+1}'[M^{2}_{n+1}-(M^{B}_{n+1})'(\Gamma^{f}_{n+1})^{-1}M^{1}_{n+1}\no\\
&&+B_{2}'S_{n+2}]+(A+B_{2}\bar{K}_{n})'P_{n+1}\}
\Big[(A+B_{2}\bar{K}_{n})-B_{1}(\Gamma^{f}_{n})^{-1}[(M^{1}_{n}+M^{B}_{n}
\bar{K}_{n})+B_{1}'S_{n+1}]\Big]
\Big\}x_{n}\no\\
&&+\Big\{\bar{K}_{n}'R_{2}\bar{G}_{n}+\{\bar{G}_{n+1}'R_{2}\bar{K}_{n+1}+(A+B_{2}\bar{K}_{n})'
P_{n+1}+\bar{G}_{n+1}'B_{2}'P_{n+2}(A+B_{2}\bar{K}_{n+1})\no\\
&&
-\bar{G}_{n+1}'B_{2}'P_{n+2}B_{1}(\Gamma^{f}_{n+1})^{-1}[M^{1}_{n+1}+M^{B}_{n+1}\bar{K}_{n+1}]
+\bar{G}_{n+1}'B_{2}'S_{n+2}\}
[-B_{1}(\Gamma^{f}_{n})^{-1}M^{B}_{n}\bar{G}_{n}
+B_{2}\bar{G}_{n}]\Big\}x_{n-1}\no
\end{eqnarray}\end{small}
\begin{small}\begin{eqnarray}
&=&\Big\{Q+\bar{K}_{n}'R_{2}\bar{K}_{n}+\bar{G}_{n+1}'\Delta_{n+1}\bar{G}_{n+1}
+(A+B_{2}\bar{K}_{n})'S_{n+1}'\no\\
&&+\Big[S_{n+1}+(A+B_{2}\bar{K}_{n})'P_{n+1}\Big]
\Big[(A+B_{2}\bar{K}_{n})-B_{1}(\Gamma^{f}_{n})^{-1}B_{1}'[S_{n+1}'+P_{n+1}(A+B_{2}\bar{K}_{n})'
]\Big]\Big\}x_{n}\no\\
&&
+\Big\{\bar{K}_{n}'R_{2}\bar{G}_{n}+\{\bar{G}_{n+1}'R_{2}\bar{K}_{n+1}+(A+B_{2}\bar{K}_{n})'
P_{n+1}+\bar{G}_{n+1}'B_{2}'P_{n+1}(A+B_{2}\bar{K}_{n+1})\no\\
&&
-\bar{G}_{n+1}'B_{2}'P_{n+1}B_{1}(\Gamma^{f}_{n+1})^{-1}[M^{1}_{n+1}+M^{B}_{n+1}\bar{K}_{n+1}]\}
[-B_{1}(\Gamma^{f}_{n})^{-1}M^{B}_{n}\bar{G}_{n}
+B_{2}\bar{G}_{n}]\Big\}x_{n-1}\no\\
&=&P_{n}x_{n}+S_{n}x_{n-1}.\no
\end{eqnarray}\end{small}\\
Thus, by induction it is easy to know that the co-state $\lambda_k^f$ for any $0\leq k\leq N$ has the form as \rf{2.13}. Now the proof of Theorem \ref{t1} is completed.

\end{proof}
\begin{remark}\label{r2}
Note that from the above results, we only give the expression of the optimal controller \rf{2.12}. However, we don't show the form of parameters $K_{k}$ and $G_{k}$. We can illustrate it with the derivation process of $u^{f}_{k}$. From the relevance between $x_{k}$ and $x_{k-1}$,  it cannot directly deduce that
$K_{k}+(\Gamma^{f}_{k})^{-1}(M^{1}_{k}+M^{B}_{k}\bar{K}_{k})=0$ and $G_{k}+(\Gamma^{f}_{k})^{-1}M^{B}_{k}\bar{G}_{k}=0$ based on \rf{f2.17}. In other words, we can not derive the gain matrices of the follower as $K_{k}=-(\Gamma^{f}_{k})^{-1}(M^{1}_{k}+M^{B}_{k}\bar{K}_{k})$ and $G_{k}=-(\Gamma^{f}_{k})^{-1}M^{B}_{k}\bar{G}_{k}$  though the optimal controller of the follower is given by \rf{2.12}. Of course, this will not affect the calculation of the optimal controller.
\end{remark}

\subsection{Results of the leader}
According to the discussion of follower's optimization problem, system \rf{f2.1} and leader's cost functional \rf{f2.4} are respectively rewritten as follows:
\begin{eqnarray}
x_{k+1}&=&\{A-B_{1}(\Gamma^{f}_{k})^{-1}[M^{1}_{k}+M^{B}_{k}\bar{K}_{k}+B_{1}'S_{k+1}']\}x_{k}
-B_{1}(\Gamma^{f}_{k})^{-1}M^{B}_{k}\bar{G}_{k}x_{k-1}+B_{2}u^{l}_{k}\no\\
&=&[A-B_{1}(\Gamma^{f}_{k})^{-1}(M^{1}_{k}+B_{1}'S_{k+1}')]x_{k}+
[B_{2}-B_{1}(\Gamma^{f}_{k})^{-1}M^{B}_{k}]u^{l}_{k}, \label{1286}
\end{eqnarray}
and
\begin{eqnarray}
 J^{l}_{N}&=&\sum^{N}_{k=0}\Big\{x'_{k}\Big[\bar{Q}+(M^{1}_{k}+B_{1}'S_{k+1}')'(\Gamma^{f}_{k})^{-1}
\bar{R}_{1}(\Gamma^{f}_{k})^{-1}(M^{1}_{k}+B_{1}'S_{k+1}')\Big]x_{k}
+2x'_{k}(M^{1}_{k}+B_{1}'S_{k+1}')'(\Gamma^{f}_{k})^{-1}\no\\
&&
\times\bar{R}_{1}(\Gamma^{f}_{k})^{-1}M^{B}_{k}u^{l}_{k}
+(u^{l}_{k})'\Big[\bar{R}_{2}+(M^{B}_{k})'(\Gamma^{f}_{k})^{-1}\bar{R}_{1}(\Gamma^{f}_{k})^{-1}
M^{B}_{k}\Big]u^{l}_{k}\Big\}+x'_{N+1}\bar{P}_{N+1}x_{N+1}.\label{1287}
\end{eqnarray}
To solve the leader's optimization problem with constraint condition \rf{f2.6}, we define the following functional with Lagrange multipliers $\lambda^{l}_{k}$ and $\beta^{l}_{k}$.
\begin{eqnarray}
  H^{l}_{k}&=&x'_{k}\Big[\bar{Q}+(M^{1}_{k}+B_{1}'S_{k+1}')'(\Gamma^{f}_{k})^{-1}
\bar{R}_{1}(\Gamma^{f}_{k})^{-1}(M^{1}_{k}+B_{1}'S_{k+1}')\Big]x_{k}\no\\
&&+2x'_{k}(M^{1}_{k}+B_{1}'S_{k+1}')'(\Gamma^{f}_{k})^{-1}
\bar{R}_{1}(\Gamma^{f}_{k})^{-1}M^{B}_{k}u^{l}_{k}
+(u^{l}_{k})'\Big[\bar{R}_{2}+(M^{B}_{k})'(\Gamma^{f}_{k})^{-1}\bar{R}_{1}(\Gamma^{f}_{k})^{-1}
M^{B}_{k}\Big]u^{l}_{k}\no\\
&&
 +(\lambda^{l}_{k})'\{[A-B_{1}(\Gamma^{f}_{k})^{-1}(M^{1}_{k}+B_{1}'S_{k+1}')]x_{k}+
[B_{2}-B_{1}(\Gamma^{f}_{k})^{-1}M^{B}_{k}]u^{l}_{k}\}\no\\
&&+(\beta^{l}_{k})'[\bar{K}_{k}x_{k}+\bar{G}_{k}x_{k-1}-u^{l}_{k}].\label{f2.30}
\end{eqnarray}
According to Lemma \ref{l1}, we can obtain the following result.
\begin{lemma}\label{l4}
Suppose the leader's optimization problem admits a solution, then, the leader's controller $u^{l}_{k}$ satisfies
\begin{eqnarray}
0&=&(M^{B}_{k})'(\Gamma^{f}_{k})^{-1}\bar{R}_{1}
(\Gamma^{f}_{k})^{-1}(M^{1}_{k}+B_{1}'S_{k+1}')x_{k}
+\Big[\bar{R}_{2}+(M^{B}_{k})'(\Gamma^{f}_{k})^{-1}\bar{R}_{1}(\Gamma^{f}_{k})^{-1}
M^{B}_{k}\Big]u^{l}_{k}\no\\
&& +[B_{2}-B_{1}(\Gamma^{f}_{k})^{-1}M^{B}_{k}]'\lambda^{l}_{k}-\beta^{l}_{k},\label{f2.31}
\end{eqnarray}
where $\lambda^{l}_k$ is as
\begin{eqnarray}
\lambda^{l}_{k-1}&=&\Big[\bar{Q}+(M^{1}_{k}+B_{1}'S_{k+1}')'(\Gamma^{f}_{k})^{-1}
\bar{R}_{1}(\Gamma^{f}_{k})^{-1}(M^{1}_{k}+B_{1}'S_{k+1}')\Big]x_{k}
+(M^{1}_{k}+B_{1}'S_{k+1}')'(\Gamma^{f}_{k})^{-1}\no\\
&&\times\bar{R}_{1}(\Gamma^{f}_{k})^{-1}M^{B}_{k}u^{l}_{k}
+[A-B_{1}(\Gamma^{f}_{k})^{-1}(M^{1}_{k}+B_{1}'S_{k+1}')]'\lambda^{l}_{k}
+\bar{K}_{k}'\beta^{l}_{k}+\bar{G}_{k+1}\beta^{l}_{k+1},\label{f2.311}\\
\lambda^{l}_{N}&=&\bar{P}_{N+1}x_{N+1},\label{f2.312}
\end{eqnarray}
with $\bar{G}_{0}=\bar{G}_{N+1}=0$. And $\beta^{l}_{k}$ is arbitrary to be determined.
\end{lemma}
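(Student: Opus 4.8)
The plan is to reproduce the Lagrange-multiplier argument of Lemma \ref{l1}, now applied to the reduced leader problem: the dynamics are the input-affine form \eqref{1286}, the running cost is the integrand of \eqref{1287}, and the structural constraint \eqref{f2.6} is adjoined. Treating $u^{l}_{k}$ as the free variable and $x_{k}$ as the state, I would introduce $\lambda^{l}_{k}$ as the multiplier for the dynamic constraint and $\beta^{l}_{k}$ as the multiplier for \eqref{f2.6}; the $k$-th term of the resulting augmented cost is exactly $H^{l}_{k}$ as defined in \eqref{f2.30}, so the leader's constrained problem becomes an unconstrained stationarity problem in $(u^{l}_{k},x_{k},\lambda^{l}_{k},\beta^{l}_{k})$.

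First I would impose stationarity in $u^{l}_{k}$. Since $u^{l}_{k}$ enters only $H^{l}_{k}$ --- through the quadratic-in-$u^{l}_{k}$ and the cross terms of \eqref{1287}, through $[B_{2}-B_{1}(\Gamma^{f}_{k})^{-1}M^{B}_{k}]u^{l}_{k}$ in the dynamics, and through $-(\beta^{l}_{k})'u^{l}_{k}$ in the constraint --- the equation $\partial H^{l}_{k}/\partial u^{l}_{k}=0$ gives \eqref{f2.31} directly. Next I would write the co-state recursion as $\lambda^{l}_{k-1}=\partial\big(\sum_{j}H^{l}_{j}\big)/\partial x_{k}$. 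The only point where this departs from the plain Lemma \ref{l1} computation is that $x_{k}$ appears in two Hamiltonian summands: in $H^{l}_{k}$ (via the quadratic and cross terms, via $[A-B_{1}(\Gamma^{f}_{k})^{-1}(M^{1}_{k}+B_{1}'S_{k+1}')]x_{k}$ in the dynamics, and via $\bar{K}_{k}x_{k}$ in the constraint), and in $H^{l}_{k+1}$ through the one-step-memory term $\bar{G}_{k+1}x_{k}$ inside $(\beta^{l}_{k+1})'(\bar{K}_{k+1}x_{k+1}+\bar{G}_{k+1}x_{k}-u^{l}_{k+1})$. Collecting both contributions yields \eqref{f2.311}, the extra summand $\bar{G}_{k+1}'\beta^{l}_{k+1}$ being precisely the derivative of that memory term; the conventions $\bar{G}_{0}=\bar{G}_{N+1}=0$ make the recursion well defined at the two ends, and the terminal cost $x'_{N+1}\bar{P}_{N+1}x_{N+1}$ of \eqref{1287} supplies the transversality condition \eqref{f2.312}.

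The work here is bookkeeping rather than conceptual. The one thing to be careful about is that the $x_{k-1}$-dependence carried along in \eqref{1286}--\eqref{1287} must be folded consistently into the substitution $u^{l}_{k}=\bar{K}_{k}x_{k}+\bar{G}_{k}x_{k-1}$ so that $H^{l}_{k}$ in \eqref{f2.30} genuinely captures every $x_{k}$- and $u^{l}_{k}$-coupling, and then the one-step-memory cross term must be tracked correctly through the differentiation in $x_{k}$ so as to land exactly on the stated $\bar{G}_{k+1}'\beta^{l}_{k+1}$ term in \eqref{f2.311}; aside from this, the argument is verbatim that of Lemma \ref{l1}, and $\beta^{l}_{k}$ remains a free multiplier, to be pinned down later from the requirement that $u^{l}_{k}$ have the prescribed linear closed-loop form.
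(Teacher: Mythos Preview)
Your proposal is correct and follows exactly the paper's approach: the paper does not even spell out a proof for this lemma, merely stating that it follows ``according to Lemma~\ref{l1}'' applied to the Hamiltonian \eqref{f2.30}, which is precisely the Lagrange-multiplier computation you outline. Your treatment is in fact more detailed than the paper's, and your observation that the one-step-memory term $\bar{G}_{k+1}x_{k}$ in $H^{l}_{k+1}$ produces the extra $\bar{G}_{k+1}'\beta^{l}_{k+1}$ contribution in the co-state recursion is exactly right (the missing transpose on $\bar{G}_{k+1}$ in \eqref{f2.311} is a typo in the paper).
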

Next, we shall solve the controller $u^{l}_{k}$ from Lemma \ref{l4}. To this end, we firstly define the following Riccati equation
\begin{eqnarray}
\bar{P}_{k}&=&\bar{Q}+(M^{1}_{k}+B_{1}'S_{k+1}')'(\Gamma^{f}_{k})^{-1}
\bar{R}_{1}(\Gamma^{f}_{k})^{-1}(M^{1}_{k}+B_{1}'S_{k+1}')
-(\mathcal{M}^{l}_{k})'(\Gamma^{l}_{k})^{-1}\mathcal{M}^{l}_{k}\no\\
&&+[A-B_{1}(\Gamma^{f}_{k})^{-1}(M^{1}_{k}+B_{1}'S_{k+1}')]'\bar{P}_{k+1}
[A-B_{1}(\Gamma^{f}_{k})^{-1}(M^{1}_{k}+B_{1}'S_{k+1}')],\label{f2.32}
\end{eqnarray}
where
\begin{eqnarray}
 \Gamma^{l}_{k}&=&\bar{R}_{2}+(M^{B}_{k})'(\Gamma^{f}_{k})^{-1}\bar{R}_{1}(\Gamma^{f}_{k})^{-1}
M^{B}_{k}+[B_{2}-B_{1}(\Gamma^{f}_{k})^{-1}M^{B}_{k}]'\bar{P}_{k+1}
[B_{2}-B_{1}(\Gamma^{f}_{k})^{-1}M^{B}_{k}],\label{f2.331}\\
\mathcal{M}^{l}_{k}&=&(M^{B}_{k})'(\Gamma^{f}_{k})^{-1}\bar{R}_{1}
(\Gamma^{f}_{k})^{-1}(M^{1}_{k}+B_{1}'S_{k+1}')+[B_{2}-B_{1}(\Gamma^{f}_{k})^{-1}
M^{B}_{k}]'\bar{P}_{k+1}\no\\
&&\times[A-B_{1}(\Gamma^{f}_{k})^{-1}(M^{1}_{k}+B_{1}'S_{k+1}')].\label{f2.332}
\end{eqnarray}

Now, we give the results of the leader's optimization problem. It should be pointed out that the discussion of leader's optimization problem is based on the solvablility of the follower's optimization problem.
\begin{theorem}\label{t2}
If the Riccati equation (\ref{f2.32}) have a solution such that $\Gamma^{l}_{k}>0$, then, the leader's optimal linear strategy is
\begin{eqnarray}
u^{l}_{k}&=&-(\Gamma^{l}_{k})^{-1}\mathcal{M}^{l}_{k}x_{k},\label{f2.33}
\end{eqnarray}
and the relationship between state and co-state is
\begin{eqnarray}
\lambda^{l}_{k-1}&=&\bar{P}_{k}x_{k}.\label{f2.34}
\end{eqnarray}
Moreover, the leader's optimal cost value is given by
\begin{eqnarray}
J^{l\ast}&=&x_{0}'\bar{P}_{0}x_{0}.\label{f2.35}
\end{eqnarray}
\end{theorem}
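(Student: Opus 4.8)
The plan is to prove Theorem \ref{t2} by backward induction on $k$, paralleling the proof of Theorem \ref{t1}. The reduced dynamics \rf{1286} and reduced cost \rf{1287} recast the leader's problem as a cross-weighted LQ problem in the pair $(x_k,u^l_k)$ subject to the current-state-feedback-type constraint \rf{f2.6}, and Lemma \ref{l4} supplies the stationarity conditions \rf{f2.31}--\rf{f2.312}. Throughout I would abbreviate the coefficients in \rf{1286} by $\mathcal{A}_k:=A-B_1(\Gamma^f_k)^{-1}(M^1_k+B_1'S_{k+1}')$ and $\mathcal{B}_k:=B_2-B_1(\Gamma^f_k)^{-1}M^B_k$, and the state-, cross-, and control-weight matrices in \rf{1287} by $\mathcal{Q}_k,\mathcal{L}_k,\mathcal{R}_k$, so that by \rf{f2.331}--\rf{f2.332} one has $\Gamma^l_k=\mathcal{R}_k+\mathcal{B}_k'\bar P_{k+1}\mathcal{B}_k$ and $\mathcal{M}^l_k=\mathcal{L}_k'+\mathcal{B}_k'\bar P_{k+1}\mathcal{A}_k$, while by \rf{f2.32} the equation for $\bar P_k$ is the usual discrete Riccati form $\bar P_k=\mathcal{Q}_k+\mathcal{A}_k'\bar P_{k+1}\mathcal{A}_k-(\mathcal{M}^l_k)'(\Gamma^l_k)^{-1}\mathcal{M}^l_k$.

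For the base case $k=N$ I would substitute the terminal condition $\lambda^l_N=\bar P_{N+1}x_{N+1}$ and $x_{N+1}=\mathcal{A}_Nx_N+\mathcal{B}_Nu^l_N$ into \rf{f2.31}; the coefficient of $u^l_N$ collapses to $\Gamma^l_N$ and that of $x_N$ to $\mathcal{M}^l_N$, so with $\Gamma^l_N>0$ one gets $u^l_N=-(\Gamma^l_N)^{-1}\mathcal{M}^l_Nx_N+(\Gamma^l_N)^{-1}\beta^l_N$. Equating with the prescribed form $u^l_N=\bar K_Nx_N+\bar G_Nx_{N-1}$ and arguing as in Theorem \ref{t1} (the multiplier $\beta^l_N$ being free to be chosen) forces $\beta^l_N=0$, hence $u^l_N=-(\Gamma^l_N)^{-1}\mathcal{M}^l_Nx_N$, which is \rf{f2.33} at $k=N$. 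Feeding this back into \rf{f2.311} with $\bar G_{N+1}=0$, $\beta^l_N=0$, $\lambda^l_N=\bar P_{N+1}x_{N+1}$ and $x_{N+1}=(\mathcal{A}_N-\mathcal{B}_N(\Gamma^l_N)^{-1}\mathcal{M}^l_N)x_N$, the coefficient of $x_N$ reassembles into $\bar P_N$, i.e. \rf{f2.34} at $k=N$. The inductive step is a verbatim repetition: assuming \rf{f2.33}, \rf{f2.34} and $\beta^l_k=0$ for all $k>n$, the identity $\lambda^l_n=\bar P_{n+1}x_{n+1}$ (the hypothesis at index $n+1$) together with $\bar G_{n+1}\beta^l_{n+1}=0$ lets me rerun the three computations at $k=n$ and obtain $u^l_n=-(\Gamma^l_n)^{-1}\mathcal{M}^l_nx_n$, $\beta^l_n=0$ and $\lambda^l_{n-1}=\bar P_nx_n$.

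For the optimal cost I would not use the co-state but argue directly: from the Riccati equation \rf{f2.32} and the optimal feedback \rf{f2.33}, a completion-of-squares computation shows that along the optimal trajectory $x_k'\bar P_kx_k-x_{k+1}'\bar P_{k+1}x_{k+1}$ equals the $k$-th running cost appearing in \rf{1287}; summing over $k=0,\dots,N$ telescopes, the terminal term $x_{N+1}'\bar P_{N+1}x_{N+1}$ cancels against the endpoint cost in \rf{1287}, and one is left with $J^{l\ast}=x_0'\bar P_0x_0$, which is \rf{f2.35}.

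\textbf{The main obstacle} is the algebra in the co-state step: verifying that the terms generated by plugging the optimal $u^l_k$ and the closed-loop $x_{k+1}$ into \rf{f2.311} collapse exactly into $\bar P_k$ of \rf{f2.32}. The crucial simplification is the symmetry identity $\mathcal{L}_k+\mathcal{A}_k'\bar P_{k+1}\mathcal{B}_k=(\mathcal{M}^l_k)'$, which is what lets the cross term $\mathcal{L}_ku^l_k$ merge with the $\mathcal{A}_k'\bar P_{k+1}\mathcal{B}_ku^l_k$ contribution into a single $-(\mathcal{M}^l_k)'(\Gamma^l_k)^{-1}\mathcal{M}^l_k$ term; one also has to track the follower-inherited pieces carrying $\bar R_1$ and $(\Gamma^f_k)^{-1}$ so that they regroup into $\mathcal{Q}_k$. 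A secondary subtlety is the rigorous justification that $\beta^l_k=0$: this is the only place where the restriction of the leader's admissible strategies to the linear closed-loop form is actually invoked, and it must be treated with the same care as the analogous step for the follower in Theorem \ref{t1}.
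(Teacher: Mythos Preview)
Your proposal is correct and follows essentially the same backward-induction route as the paper: base case at $k=N$ from the terminal condition, derivation of $u^l_N$ and $\beta^l_N=0$ from \rf{f2.31} plus the linear closed-loop form, verification of $\lambda^l_{N-1}=\bar P_Nx_N$ from \rf{f2.311}, and then the identical computation at a generic $k=n$ under the inductive hypothesis $\lambda^l_n=\bar P_{n+1}x_{n+1}$. Your abbreviations $\mathcal A_k,\mathcal B_k,\mathcal Q_k,\mathcal L_k,\mathcal R_k$ and the identity $\mathcal L_k+\mathcal A_k'\bar P_{k+1}\mathcal B_k=(\mathcal M^l_k)'$ merely streamline the same algebra the paper writes out in full; the one place you actually add content is the cost formula \rf{f2.35}, which the paper states but does not verify in its proof, whereas your completion-of-squares/telescoping argument is the standard way to establish it.
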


\begin{proof}
Using the backward iteration method, from \rf{f2.31} with $k=N$, we obtain
\begin{eqnarray}
0&=&(M^{B}_{N})'(\Gamma^{f}_{N})^{-1}\bar{R}_{1}
(\Gamma^{f}_{N})^{-1}M^{1}_{N}x_{N}
+\Big[\bar{R}_{2}+(M^{B}_{N})'(\Gamma^{f}_{N})^{-1}\bar{R}_{1}(\Gamma^{f}_{N})^{-1}
M^{B}_{N}\Big]u^{l}_{N}\no\\
&&
 +[B_{2}-B_{1}(\Gamma^{f}_{N})^{-1}M^{B}_{N}]'\bar{P}_{N+1}
\{[A-B_{1}(\Gamma^{f}_{N})^{-1}M^{1}_{N}]x_{N}+
[B_{2}-B_{1}(\Gamma^{f}_{N})^{-1}M^{B}_{N}]u^{l}_{N}\}-\beta^{l}_{N}\no\\
&=&\Big\{\bar{R}_{2}+(M^{B}_{N})'(\Gamma^{f}_{N})^{-1}\bar{R}_{1}(\Gamma^{f}_{N})^{-1}
M^{B}_{N}+[B_{2}-B_{1}(\Gamma^{f}_{N})^{-1}M^{B}_{N}]'\bar{P}_{N+1}
[B_{2}-B_{1}(\Gamma^{f}_{N})^{-1}M^{B}_{N}]\Big\}u^{l}_{N}\no\\
&&
+\Big\{(M^{B}_{N})'(\Gamma^{f}_{N})^{-1}\bar{R}_{1}
(\Gamma^{f}_{N})^{-1}M^{1}_{N}+[B_{2}-B_{1}(\Gamma^{f}_{N})^{-1}M^{B}_{N}]'\bar{P}_{N+1}
[A-B_{1}(\Gamma^{f}_{N})^{-1}M^{1}_{N}]\Big\}x_{N}-\beta^{l}_{N},\no
\end{eqnarray}
i.e.,
\begin{eqnarray}
u^{l}_{N}&=&-(\Gamma^{l}_{N})^{-1}\mathcal{M}^{l}_{N}x_{N}
+(\Gamma^{l}_{N})^{-1}\beta^{l}_{N}.\label{f2.37}
\end{eqnarray}
From the linear closed-loop form $\bar{K}_{N}x_{N}+\bar{G}_{N}x_{N-1}$, we derive that
\begin{eqnarray}
[\bar{K}_{N}+(\Gamma^{l}_{N})^{-1}\mathcal{M}^{l}_{N}]x_{N}
+\bar{G}_{N}x_{N-1}-(\Gamma^{l}_{N})^{-1}\beta^{l}_{N}=0.\label{1259}
\end{eqnarray}
Note that \rf{1259} holds for any $\beta^{l}_{N}$, it yields that
\begin{eqnarray}
[\bar{K}_{N}+(\Gamma^{l}_{N})^{-1}\mathcal{M}^{l}_{N}]x_{N}+\bar{G}_{N}x_{N-1}=0.\no
\end{eqnarray}
That is, the optimal controller is given by
\begin{eqnarray}
u^{f}_{N}&=&\bar{K}_{N}x_{N}+\bar{G}_{N}x_{N-1}\no\\
&=&-(\Gamma^{l}_{N})^{-1}\mathcal{M}^{l}_{N}x_{N}.\label{121314}
\end{eqnarray}
On the other hand, consider \rf{1259} again, due to the invertibility of $\Gamma^{l}_{N}$, it yields that $\beta^{l}_{N}=0$.

For $\lambda^{l}_{N-1}$, from \rf{f2.311}, we have
\begin{eqnarray}
\lambda^{l}_{N-1}&=&\Big[\bar{Q}+(M^{1}_{N})'(\Gamma^{f}_{N})^{-1}
\bar{R}_{1}(\Gamma^{f}_{N})^{-1}(M^{1}_{N})\Big]x_{N}
+(M^{1}_{N})'(\Gamma^{f}_{N})^{-1}
\bar{R}_{1}(\Gamma^{f}_{N})^{-1}M^{B}_{N}u^{l}_{N}\no\\
&&+[A-B_{1}(\Gamma^{f}_{N})^{-1}(M^{1}_{N})]'\bar{P}_{N+1}
\{[A-B_{1}(\Gamma^{f}_{N})^{-1}M^{1}_{N}]x_{N}+
[B_{2}-B_{1}(\Gamma^{f}_{N})^{-1}M^{B}_{N}]u^{l}_{N}\}\no\\
&=&\Big\{\bar{Q}+(M^{1}_{N})'(\Gamma^{f}_{N})^{-1}
\bar{R}_{1}(\Gamma^{f}_{N})^{-1}(M^{1}_{N})+[A-B_{1}(\Gamma^{f}_{N})^{-1}(M^{1}_{N})]'\bar{P}_{N+1}
[A-B_{1}(\Gamma^{f}_{N})^{-1}M^{1}_{N}]\Big\}x_{N}\no\\
&&+\Big\{(M^{1}_{N})'(\Gamma^{f}_{N})^{-1}
\bar{R}_{1}(\Gamma^{f}_{N})^{-1}M^{B}_{N}
+[A-B_{1}(\Gamma^{f}_{N})^{-1}(M^{1}_{N})]'\bar{P}_{N+1}
[B_{2}-B_{1}(\Gamma^{f}_{N})^{-1}M^{B}_{N}]\Big\}u^{l}_{N}\no\\
&=&\bar{P}_{N}x_{N}.\label{f2.38}
\end{eqnarray}

For any $n$ with $0\leq n\leq N$, we assume that $\Gamma^{l}_{k}>0$ and the optimal follower's controller $u^{l}_{k}$ and co-state $\lambda^{l}_{k-1}$ are respectively given as in \rf{f2.33} and \rf{f2.34} for $k>n$. Next, we show that $u^{l}_{k}$ and $\lambda^{l}_{k-1}$ have the same form as \rf{f2.33} and \rf{f2.34} for $k=n$, respectively. For $u^{l}_{n}$, from \rf{f2.31}, we have that
\begin{eqnarray}
0&=&(M^{B}_{n})'(\Gamma^{f}_{n})^{-1}\bar{R}_{1}
(\Gamma^{f}_{n})^{-1}(M^{1}_{n}+B_{1}'S_{k+1}')x_{n}
+\Big[\bar{R}_{2}+(M^{B}_{n})'(\Gamma^{f}_{n})^{-1}\bar{R}_{1}(\Gamma^{f}_{n})^{-1}
M^{B}_{n}\Big]u^{l}_{n}\no\\
&&+[B_{2}-B_{1}(\Gamma^{f}_{n})^{-1}M^{B}_{n}]'\lambda^{l}_{n}-\beta^{l}_{n}\no\\
&=&(M^{B}_{n})'(\Gamma^{f}_{n})^{-1}\bar{R}_{1}
(\Gamma^{f}_{n})^{-1}(M^{1}_{n}+B_{1}'S_{k+1}')x_{n}
+\Big[\bar{R}_{2}+(M^{B}_{n})'(\Gamma^{f}_{n})^{-1}\bar{R}_{1}(\Gamma^{f}_{n})^{-1}
M^{B}_{n}\Big]u^{l}_{n}\no\\
&&+[B_{2}-B_{1}(\Gamma^{f}_{n})^{-1}M^{B}_{n}]'\bar{P}_{n+1}
\Big\{[A-B_{1}(\Gamma^{f}_{n})^{-1}(M^{1}_{n}+B_{1}'S_{n+1}')]x_{n}+
[B_{2}-B_{1}(\Gamma^{f}_{n})^{-1}M^{B}_{n}]u^{l}_{n}\Big\}-\beta^{l}_{n}\no\\
&=&\Big\{(M^{B}_{n})'(\Gamma^{f}_{n})^{-1}\bar{R}_{1}
(\Gamma^{f}_{n})^{-1}(M^{1}_{n}+B_{1}'S_{k+1}')
+[B_{2}-B_{1}(\Gamma^{f}_{n})^{-1}M^{B}_{n}]'\bar{P}_{n+1}\no\\
&&\times
[A-B_{1}(\Gamma^{f}_{n})^{-1}(M^{1}_{n}+B_{1}'S_{n+1}')]\Big\}x_{n}
+\Big\{\Big[\bar{R}_{2}+(M^{B}_{n})'(\Gamma^{f}_{n})^{-1}\bar{R}_{1}(\Gamma^{f}_{n})^{-1}
M^{B}_{n}\Big]\no\\
&&+[B_{2}-B_{1}(\Gamma^{f}_{n})^{-1}M^{B}_{n}]'\bar{P}_{n+1}
[B_{2}-B_{1}(\Gamma^{f}_{n})^{-1}M^{B}_{n}]\Big\}u^{l}_{n}-\beta^{l}_{n}.\no
\end{eqnarray}
It yields that
\begin{eqnarray}
u^{l}_{n}&=&-(\Gamma^{l}_{n})^{-1}\mathcal{M}^{l}_{n}x_{n}
+(\Gamma^{l}_{n})^{-1}\beta^{l}_{n},\no
\end{eqnarray}
combining that linear form $u^{l}_{n}=\bar{K}_{n}x_{n}+\bar{G}_{n}x_{n-1}$, we get that
\begin{eqnarray}
(\bar{K}_{n}+(\Gamma^{l}_{n})^{-1}\mathcal{M}^{l}_{n})x_{n}+\bar{G}_{n}x_{n-1}
-(\Gamma^{l}_{n})^{-1}\beta^{l}_{n}=0.\label{121315}
\end{eqnarray}
Note that \rf{121315} holds for any $\beta^{l}_{n}$, it yields that
\begin{eqnarray}
(\bar{K}_{n}+(\Gamma^{l}_{n})^{-1}\mathcal{M}^{l}_{n})x_{n}+\bar{G}_{n}x_{n-1}=0.\no
\end{eqnarray}
That is, the optimal controller is given by
\begin{eqnarray}
u^{l}_{n}&=&\bar{K}_{n}x_{n}+\bar{G}_{n}x_{n-1}\no\\
&=&-(\Gamma^{l}_{n})^{-1}\mathcal{M}^{l}_{n}x_{n}.\label{f2.40}
\end{eqnarray}
On the other hand, consider \rf{121315} again, due to the invertibility of $\Gamma^{l}_{n}$, it yields that $\beta^{l}_{n}=0$.  Thus, by induction it is easy
to know that the controller $u^{l}_{k}$ for any $0\leq k\leq N$ has the form as \rf{f2.33}.

Next, we obtain the expression of $\lambda^{l}_{n-1}$. From \rf{f2.311}, we have that
\begin{small}\begin{eqnarray}
\lambda^{f}_{n-1}&=&\Big[\bar{Q}+(M^{1}_{n}+B_{1}'S_{k+1}')'(\Gamma^{f}_{n})^{-1}
\bar{R}_{1}(\Gamma^{f}_{n})^{-1}(M^{1}_{n}+B_{1}'S_{k+1}')\Big]x_{n}
+(M^{1}_{n}+B_{1}'S_{k+1}')'(\Gamma^{f}_{n})^{-1}
\bar{R}_{1}(\Gamma^{f}_{n})^{-1}M^{B}_{n}u^{l}_{n}\no\\
&&+[A-B_{1}(\Gamma^{f}_{n})^{-1}(M^{1}_{n}+B_{1}'S_{k+1}')]'\bar{P}_{n+1}
\Big\{[A-B_{1}(\Gamma^{f}_{n})^{-1}(M^{1}_{n}+B_{1}'S_{n+1}')]x_{n}+
[B_{2}-B_{1}(\Gamma^{f}_{n})^{-1}M^{B}_{n}]u^{l}_{n}\Big\}\no\\
&=&\Big\{\bar{Q}+(M^{1}_{n}+B_{1}'S_{k+1}')'(\Gamma^{f}_{n})^{-1}
\bar{R}_{1}(\Gamma^{f}_{n})^{-1}(M^{1}_{n}+B_{1}'S_{k+1}')
+[A-B_{1}(\Gamma^{f}_{n})^{-1}(M^{1}_{n}+B_{1}'S_{k+1}')]'\bar{P}_{n+1}\no\\
&&\times
[A-B_{1}(\Gamma^{f}_{n})^{-1}(M^{1}_{n}+B_{1}'S_{n+1}')]\Big\}x_{n}
+\Big\{(M^{1}_{n}+B_{1}'S_{k+1}')'(\Gamma^{f}_{n})^{-1}
\bar{R}_{1}(\Gamma^{f}_{n})^{-1}M^{B}_{n}\no\\
&&+[A-B_{1}(\Gamma^{f}_{n})^{-1}(M^{1}_{n}+B_{1}'S_{k+1}')]'\bar{P}_{n+1}
[B_{2}-B_{1}(\Gamma^{f}_{n})^{-1}M^{B}_{n}]\Big\}u^{l}_{n}\no\\
&=&\Big\{\bar{Q}+(M^{1}_{n}+B_{1}'S_{k+1}')'(\Gamma^{f}_{n})^{-1}
\bar{R}_{1}(\Gamma^{f}_{n})^{-1}(M^{1}_{n}+B_{1}'S_{k+1}')
+[A-B_{1}(\Gamma^{f}_{n})^{-1}(M^{1}_{n}+B_{1}'S_{k+1}')]'\bar{P}_{n+1}\no\\
&&\times
[A-B_{1}(\Gamma^{f}_{n})^{-1}(M^{1}_{n}+B_{1}'S_{n+1}')]
-(\mathcal{M}^{l}_{n})'(\Gamma^{l}_{n})^{-1}\mathcal{M}^{l}_{n}\Big\}x_{n}.\no
\end{eqnarray}\end{small}
Thus, by induction it is easy to know that the co-state $\lambda^{l}_{k}$ for any $0\leq k \leq N$ has the form as \rf{f2.34}. Now the proof of Theorem \ref{t2} is completed.
\end{proof}
\begin{remark}\label{r3}
Similar to Remark \ref{r2}, from the relevance between $x_{k}$ and $x_{k-1}$,  it cannot directly deduce that
$\bar{K}_{k}+(\Gamma^{l}_{k})^{-1}\mathcal{M}^{l}_{k}=0$ and $\bar{G}_{k}=0$. For the selection of $\bar{K}_{k}$ and $\bar{G}_{k}$, it is to be shown in the sequence.
\end{remark}

\subsection{The determination of $\bar{K}_{k}$ and $\bar{G}_{k}$}
It can be seen from Theorem \ref{t1} and \ref{t2} that the design of controller and the associated performance are based on the gain matrices  $\bar{K}_{k}$ and $\bar{G}_{k}$ which are to be determined.  In the following, we shall derive the gain matrices by a method of twice optimization for the performance of the follower, which will lead to a better performance.
\begin{theorem}\label{t3}
If $\Delta_{k}$ in \rf{2.113} is invertible, then, $\bar{K}_{k}$ and $\bar{G}_{k}$ are given as follows
\begin{eqnarray}
\bar{K}_{k}&=&-\Delta^{-1}_{k}\mathcal{M}^{2}_{k},\label{f2.44}\\
\bar{G}_{k}&=&[-(\Gamma^{l}_{k})^{-1}\mathcal{M}^{l}_{k}+\Delta^{-1}_{k}
\mathcal{M}^{2}_{k}]
[A-B_{1}(\Gamma^{f}_{k-1})^{-1}\mathcal{M}^{f}_{k-1}
-B_{2}(\Gamma^{l}_{k-1})^{-1}\mathcal{M}^{l}_{k-1}],\label{f2.55}
\end{eqnarray}
where the parameters are as in \rf{2.9}-\rf{2.11} and \rf{f2.32}-\rf{f2.332}.

In this case, the follower's optimal controller is rewritten as
\begin{eqnarray}
u^{f}_{k}&=&-(\Gamma^{f}_{k})^{-1}\mathcal{M}^{f}_{k}x_{k},\label{f2.51}
\end{eqnarray}
where
\begin{eqnarray}
\mathcal{M}^{f}_{k}&=&M^{1}_{k}+B_{1}'S_{k+1}'
-M^{B}_{k}(\Gamma^{l}_{k})^{-1}\mathcal{M}^{l}_{k}.\label{f2.53}
\end{eqnarray}

\end{theorem}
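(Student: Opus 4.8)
The plan is to pin down the two gain families $\bar{K}_k$ and $\bar{G}_k$ by spending the freedom that Theorems~\ref{t1} and~\ref{t2} leave open: Theorem~\ref{t2} only forces the combination $\bar{K}_k x_k+\bar{G}_k x_{k-1}=-(\Gamma^{l}_k)^{-1}\mathcal{M}^{l}_k x_k$ to hold along the closed-loop trajectory, not $\bar{K}_k$ and $\bar{G}_k$ separately (cf.\ Remark~\ref{r3}). I would use the residual freedom in $\bar{K}_k$ for a second optimization on the follower's side --- a further minimization of the follower's cost-to-go --- and then recover $\bar{G}_k$ from the consistency identity together with the one-step closed-loop dynamics.

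For $\bar{K}_k$ I would regard the follower's optimal cost-to-go $x_k'P_kx_k$ (with $P_k$ from the recursion \rf{2.9}--\rf{2.10}) as a quadratic in $\bar{K}_k$ through \rf{2.9}, with $P_{k+1},S_{k+1},\bar{G}_{k+1}$ --- none of which contains $\bar{K}_k$ --- held fixed, and impose stationarity. Differentiating \rf{2.9} in $\bar{K}_k$ gives the first-order condition
\[
R_2\bar{K}_k+B_2'P_{k+1}(A+B_2\bar{K}_k)+B_2'S_{k+1}'-(M^{B}_k)'(\Gamma^{f}_k)^{-1}\bigl(M^{1}_k+M^{B}_k\bar{K}_k+B_1'S_{k+1}'\bigr)=0 ,
\]
and, reading the coefficient of $\bar{K}_k$ as $\Delta_k$ in \rf{2.113} and matching the remaining terms against \rf{2.111}--\rf{2.11}, this is exactly $\Delta_k\bar{K}_k+\mathcal{M}^{2}_k=0$, i.e.\ \rf{f2.44}. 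The decisive consequence, read straight off \rf{2.10}, is $S_k=\bar{G}_k'\bigl(\Delta_k\bar{K}_k+\mathcal{M}^{2}_k\bigr)=0$ for all $k$; this is cleanest to organize as a backward induction from $S_{N+1}=0$, after which $\mathcal{M}^{2}_k$ simplifies to $M^{2}_k-(M^{B}_k)'(\Gamma^{f}_k)^{-1}M^{1}_k$ throughout.

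With $S_k\equiv 0$ in hand, the rewritten follower controller \rf{f2.51}--\rf{f2.53} is immediate: regrouping \rf{2.12} as $u^{f}_k=-(\Gamma^{f}_k)^{-1}(M^{1}_k+B_1'S_{k+1}')x_k-(\Gamma^{f}_k)^{-1}M^{B}_k\bigl(\bar{K}_k x_k+\bar{G}_k x_{k-1}\bigr)$ and substituting $\bar{K}_k x_k+\bar{G}_k x_{k-1}=u^{l}_k=-(\Gamma^{l}_k)^{-1}\mathcal{M}^{l}_k x_k$ from \rf{f2.33} yields $u^{f}_k=-(\Gamma^{f}_k)^{-1}\mathcal{M}^{f}_k x_k$ with $\mathcal{M}^{f}_k$ as in \rf{f2.53}. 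Finally, to get $\bar{G}_k$: the consistency identity together with \rf{f2.44} gives $\bar{G}_k x_{k-1}=[-(\Gamma^{l}_k)^{-1}\mathcal{M}^{l}_k+\Delta^{-1}_k\mathcal{M}^{2}_k]x_k$, and substituting the one-step closed-loop relation $x_k=[A-B_1(\Gamma^{f}_{k-1})^{-1}\mathcal{M}^{f}_{k-1}-B_2(\Gamma^{l}_{k-1})^{-1}\mathcal{M}^{l}_{k-1}]x_{k-1}$ --- obtained from \rf{f2.1} with \rf{f2.51} and \rf{f2.33} at index $k-1$ --- and matching the coefficient of $x_{k-1}$ yields \rf{f2.55}; for $k=0$ this is consistent with the standing convention $\bar{G}_0=0$.

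The step I expect to be the real obstacle is the matrix calculus in the second paragraph: differentiating the quadratic recursion \rf{2.9} in $\bar{K}_k$ while correctly handling the transposes in the cross term $(A+B_2\bar{K}_k)'S_{k+1}'+S_{k+1}(A+B_2\bar{K}_k)$ and in $(M^{1}_k+M^{B}_k\bar{K}_k+B_1'S_{k+1}')'(\Gamma^{f}_k)^{-1}(\cdot)$, and then verifying that the stationarity condition reassembles \emph{exactly} as $\Delta_k\bar{K}_k+\mathcal{M}^{2}_k=0$ via \rf{2.111}--\rf{2.11} (any transpose on $S_{k+1}$ appearing en route is immaterial, since $S_k=0$ in the resulting solution). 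I would also verify the internal consistency of the whole scheme, namely that feeding \rf{f2.44} and \rf{f2.55} back into \rf{2.9} and \rf{f2.32} reproduces the quantities $\Gamma^{f}_k,\Gamma^{l}_k,\mathcal{M}^{f}_k,\mathcal{M}^{l}_k$ used in the formulas; this closes because the coupled Riccati system can be solved by a single backward sweep --- at stage $k$ one computes the leader quantities from $\bar{P}_{k+1}$, then $\bar{K}_k$ and $S_k=0$, then $\bar{G}_{k+1}$, then $P_k$.
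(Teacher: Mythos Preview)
Your proposal is correct and follows essentially the same route as the paper: both determine $\bar{K}_k$ by differentiating the follower's Riccati quantity $P_k$ in \rf{2.9} with respect to $\bar{K}_k$ to obtain $\Delta_k\bar{K}_k+\mathcal{M}^{2}_k=0$, then recover $\bar{G}_k$ from the consistency identity $\bar{G}_k x_{k-1}=u^{l}_k-\bar{K}_k x_k$ combined with the one-step closed-loop dynamics, and derive \rf{f2.51} by substituting $u^{l}_k=-(\Gamma^{l}_k)^{-1}\mathcal{M}^{l}_k x_k$ into \rf{2.12}. Your additional observation that $S_k\equiv 0$ (read off \rf{2.10} once $\Delta_k\bar{K}_k+\mathcal{M}^{2}_k=0$) is a valid simplification the paper does not state explicitly but which is consistent with its formulas.
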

\begin{proof}
From the result of Theorem \ref{t1} and \ref{t2}, the follower's optimal strategy can be further calculated as follows.
\begin{eqnarray}
u^{f}_{k}&=&-(\Gamma^{f}_{k})^{-1}(M^{1}_{k}+M^{B}_{k}\bar{K}_{k}+B_{1}'S_{k+1}')x_{k}
-(\Gamma^{f}_{k})^{-1}M^{B}_{k}\bar{G}_{k}x_{k-1}\no\\
&=&-(\Gamma^{f}_{k})^{-1}(M^{1}_{k}+B_{1}'S_{k+1}')x_{k}
+(\Gamma^{f}_{k})^{-1}M^{B}_{k}(\Gamma^{l}_{k})^{-1}\mathcal{M}^{l}_{k}x_{k}\no\\
&=&-[\Gamma^{f}_{k})^{-1}(M^{1}_{k}+B_{1}'S_{k+1}'-M^{B}_{k}(\Gamma^{l}_{k})^{-1}
\mathcal{M}^{l}_{k}]
x_{k},\no
\end{eqnarray}
which is \rf{f2.51}.

Note that not only the leader's optimal solution is calculated based on $\bar{K}_{k}$ and $\bar{G}_{k}$, but also the follower's optimal solution. Hence, it is necessary to show the calculation of $\bar{K}_{k}$ and $\bar{G}_{k}$. For selecting the better expression, consider the follower's cost value \rf{f2.21} which is related to $\bar{K}_{k}$, we select the parameters $\bar{K}_{k}$ and $\bar{G}_{k}$ to minimize \rf{f2.21} more. Hence,
\begin{eqnarray}
0=\frac{\partial P_{k}}{\partial \bar{K}_{k}}&=&R_{2}\bar{K}_{k}+B_{2}'P_{k+1}(A+B_{2}\bar{K}_{k})
+B_{2}'S_{k+1}'-(M^{B}_{k})'(\Gamma^{f}_{k})^{-1}
(M^{1}_{k}+M^{B}_{k}\bar{K}_{k}+B_{1}'S_{k+1}')\no\\
&=&[R_{2}+B_{2}'P_{k+1}B_{2}-(M^{B}_{k})'(\Gamma^{f}_{k})^{-1}M^{B}_{k}]\bar{K}_{k}
+[B_{2}'P_{k+1}A+B_{2}'S_{k+1}'\no\\
&&-(M^{B}_{k})'(\Gamma^{f}_{k})^{-1}
(M^{1}_{k}+B_{1}'S_{k+1}')]\no\\
&=&\Delta_{k}\bar{K}_{k}+\mathcal{M}^{2}_{k},\label{f2.42}
\end{eqnarray}
From \rf{f2.42}, when $\Delta_{k}$ is invertible, obviously, it yields
\begin{eqnarray}
\bar{K}_{k}=-\Delta^{-1}_{k}\mathcal{M}^{2}_{k}.\label{f2.44}
\end{eqnarray}
Furthermore, consider the given linear constraint \rf{f2.6} and the optimal result \rf{f2.33}, we get
\begin{eqnarray}
\bar{G}_{k}x_{k-1}&\equiv&u^{l}_{k} -\bar{K}_{k}x_{k}\no\\ &=&-(\Gamma^{l}_{k})^{-1}\mathcal{M}^{l}_{k}x_{k}+\Delta^{-1}_{k}\mathcal{M}^{2}_{k}x_{k}\no\\
&=&[ -(\Gamma^{l}_{k})^{-1}\mathcal{M}^{l}_{k}+\Delta^{-1}_{k}\mathcal{M}^{2}_{k}]
[A-B_{1}(\Gamma^{f}_{k-1})^{-1}\mathcal{M}^{f}_{k-1}
-B_{2}(\Gamma^{l}_{k-1})^{-1}\mathcal{M}^{l}_{k-1}]x_{k-1}.\no
\end{eqnarray}
Due to the fact that the above relationship holds for any $x_{k-1}$, therefore, the selection of  $\bar{G}_{k}$ is as in \rf{f2.55}.
\end{proof}

\section{Simulation}
We consider the above Stackelberg problem with the following coefficients:
$A=3, B_1=1, B_2=1, Q=2, R_1=1, R_2=10, \bar{Q}=1, \bar{R}_1=1, \bar{R}_2=2$. Given the  terminal values $P(N+1)=\bar{P}(N+1)=2$ and an initial value $x_0=5$. Next, we will give the results of feedback case and liner closed-loop case.

First, for the standard feedback case, i.e., $u^{f}_{k}=K_{k}x_{k}, u^{l}_{k}=\bar{K}_{k}x_{k}$, by calculating, the
optimal strategies are as follows (Table 1).
\begin{center}
	\textbf{Table 1}~~~the feedback results\\[5pt]
	\renewcommand\arraystretch{2}
	\setlength{\tabcolsep}{1.2mm}{
		        \begin{tabular}{|c|c|c|c|c|c|c|}
            \hline
           Step k &0&1&2&3&4&5\\
            \hline
            $u^{f}_{k}$ & $ -1.9350x_{0}$  &$-1.9349x_{1}$ & $-1.9348x_{2}$ &$-1.9333x_{3}$&$-1.9107x_{4}$&$-1.5000x_{5}$\\
  \hline
$u^{l}_{k}$& $-0.9335 x_{0}$ &$-0.9335x_{1}$ & $-0.9334x_{2}$ &$-0.9322x_{3}$&$-0.9156
x_{4}$&$-0.7500x_{5}$\\
            \hline
        \end{tabular}}
\end{center}
And in this case, the optimal cost values are given by $J^{f\ast}=367.8335, J^{l\ast}=165.0297$.

On the other hand, for the linear closed-loop case, i.e., $u^{f}_{k}=K_{k}x_{k}+G_{k}x_{k-1},\
u^{l}_{k}=\bar{K}_{k}x_{k}+\bar{G}_{k}x_{k-1}$, by deriving the gain matrices, it can be written as a form of a current state feedback, and the results are calculated as follows (Table 2).
\begin{center}
\textbf{Table 2}~~~the linear closed-loop results \\[7pt]
\renewcommand\arraystretch{2}
\setlength{\tabcolsep}{1.2mm}{
        \begin{tabular}{|c|c|c|c|c|c|c|}
            \hline
            Step k &0&1&2&3&4&5\\
            \hline
            $u^{f}_{k}$ & $-1.8124x_{0}$  &$-1.8136x_{1}$ & $-1.8889x_{2}$ &$-1.8873x_{3}$&$-1.8576x_{4}$&$-1.5000x_{5}$\\
            \hline
$u^{l}_{k}$& $ -0.9062 x_{0}$ &$-0.9062x_{1}$ & $ -0.9068x_{2}$ &$ -0.9142x_{3}$&$-0.8988
x_{4}$&$-0.7500x_{5}$\\
            \hline
        \end{tabular}}
\end{center}
where the results of $\bar{K}_{k}$ and $\bar{G}_{k}$ are given:
\begin{center}
	\textbf{Table 3}~~~values of $\bar{K}_{k}$ and $\bar{G}_{k}$ \\[5pt]
	\renewcommand\arraystretch{2}
	\setlength{\tabcolsep}{1.2mm}{
		        \begin{tabular}{|c|c|c|c|c|c|c|}
            \hline
           Step k &0&1&2&3&4&5\\
            \hline
            $\bar{K}_{k}$ & $ -0.9062$  &$-0.2497$ & $-0.2493$ &$-0.2489$&$-0.2437
$&$-0.1875$\\
  \hline
$\bar{G}_{k}$&  &$-0.1876$ & $ -0.1874$ &$ -0.1860$&$-0.1300$&$ -0.1370$\\
            \hline
        \end{tabular}}
\end{center}
It should be pointed out that the gain matrices $\bar{K}_{k}, \bar{G}_{k}$ given in Table 3 satisfy the relationship $\bar{K}_{k}x_{k}+\bar{G}_{k}x_{k-1}=K_{k}x_{k}$, where $K_{k}$ is the corresponding gain matrix in the form of state feedback in Table 2. And the optimal cost values of linear closed-loop case are $J^{f\ast}=366.4332,J^{l\ast}=160.9312$.

Comparing the cost values of the feedback case and the linear closed-loop case, it is not hard to note that the feedback result may not always minimize the cost values of Stackelberg problem. Actually, it is rational due to that more information is obtained by players for the linear closed-loop case.

\section{Conclusion}

In this paper, we studied the the closed-loop Stackelberg strategy for linear-quadratic leader-follower game. The linear closed-loop Stackelberg strategy with one-step memory has been explicitly given in terms of Riccat equations. The key technique is to apply the constrained maximum principle to the leader-follower game and explicitly solve the corresponding forward and backward difference equations. Numerical examples show that the derived strategy achieves better performance than feedback strategy.

\bibliographystyle{plain}

%
\end{document}